\newtheorem{theorem}{Theorem}[section]
\newtheorem{definition}{Definition}[section]
\newtheorem{lemma}{Lemma}[section]
\newtheorem{proposition}{Proposition}[section]
\numberwithin{equation}{section}
\begin{document} 
\title{{\large  \textbf{ Stochastic homogenization for two dimensional Navier--Stokes equations with random  coefficients }}\thanks{\footnotesize{
{Corresponding author. }
			}}}
\author{\small{Dong Su$^a$, Hui Liu$^{b,c}$, Yangyang Shi$^{d*}$ \thanks{dsu224466@163.com, liuhuinanli@126.com, shiyy@chnu.edu.cn.
			}}\\
	\footnotesize{a. School of Mathematics, Nanjing Audit University, Nanjing, Jiangsu 211815, P.R. China;}
\\
	\footnotesize{b. School of Mathematical Sciences, University of Jinan, Jinan,  Shandong 250022, P.R. China;}
\\
	\footnotesize{c. School of Mathematical Sciences, Qufu Normal University, Qufu, Shandong 273165, P.R. China;}
\\
	\footnotesize{d. School of Mathematical and Statistics, Huaibei Normal University, Huaibei  235000, P.R. China;}
}
\date{}
\maketitle
\noindent{\small{\hspace{1.1cm} }}\\
	\\
\noindent \textbf{Abstract~~~}    This paper  derives the  stochastic homogenization for two dimensional Navier--Stokes equations with random  coefficients.  By means of weak convergence method and Stratonovich--Khasminskii averaging principle approach, the solution of  two dimensional Navier--Stokes equations with random  coefficients  converges in distribution to the solution of two  dimensional Navier--Stokes equations with constant coefficients.
\\[2mm]
\textbf{Key words~~~} Stochastic homogenization,  stochastic Navier--Stokes equations, random coefficients.
\\[2mm]
\\
\textbf{2020 Mathematics Subject Classification~~~} 60H15, 60H30, 76D05
\section {Introduction}

The  incompressible Navier--Stokes equations play an important role in  fluid mechanics, atmospheric and ocean dynamics. Over the past many years, the study of the incompressible Navier--Stokes equations has been received much attention. It is given by as follows
\begin{eqnarray*}\label{intr-equ1}
\begin{cases}
{u}_{t}(t)+[-\nu\Delta u(t)+\langle u(t),\nabla\rangle u(t)+\nabla p]=0,\\
\mathrm{div}\; u(t)=0,\\
u(0)=u_0(x),\label{intr-equ1-ini}
\end{cases}
\end{eqnarray*}
where $\nu > 0$ is the viscosity of fluid, the space variable $x=(x_1,x_2)\in \mathbb{T}^2=\mathbb{R}^2/2\pi\mathbb{Z}^2$, the velocity $u(x,t)=(u_1(x,t),u_2(x,t))$ is vector function,  the pressure $p(x,t)$ is   scalar function and $\langle u, \nabla\rangle=u_1 \partial_1+u_2 \partial_2$. To simplify notation, we assume that $\nu=1$ in the following sections.

The fluid dynamics is affected by random forces in the turbulence and statistical hydrodynamics~\cite{Ca,Cha,No,Vi}.  The model of the stochastic Navier--Stokes equations can simulate structural vibrations in aeronautical applications, while the  unknown external forces such as solar heating and industrial pollution can be regarded as random forces in atmospheric dynamics. Inspired by the above facts,  we consider the stochastic homogenization of the  following two--dimensional Navier--Stokes equations for viscous incompressible fluids with random  coefficients defined  on $\mathbb{T}^2$
\begin{eqnarray}\label{main-equ1}
\begin{cases}
{u}_{t}^{\varepsilon}(t)+[-\Delta u^{\varepsilon}(t)+\langle u^\varepsilon(t),\nabla\rangle u^\varepsilon(t)+\nabla p^\varepsilon]=\tilde{q}^{\varepsilon}(x,\omega)u^{\varepsilon}(t)+\tilde{\sigma}^\varepsilon(t,u^\varepsilon(t))dW(t),\\
\mathrm{div}\; u^\varepsilon(t)=0,\\
u^{\varepsilon}(0)=u_0(x).\label{main-equ1-ini}
\end{cases}
\end{eqnarray}
Here $\{W(t)\}$ is a Winner process on complete probability space $(\Omega, \mathcal{F}, \mathbb{P})$, and we denote by $Q$ is the covariance operators of $\{W(t)\}$. The potential $\tilde{q}^{\varepsilon}(x,\omega)=\tilde{q}(\frac{x}{\varepsilon},\omega)$ is highly oscillatory, where $0<\varepsilon<1$ denotes the scale of oscillations. The term $\tilde{\sigma}^\varepsilon(t,u^\varepsilon(t))=\tilde{\sigma}\left(\frac{t}{\varepsilon},u^\varepsilon(t)\right)$ in the system (\ref{main-equ1}).

Lately, the study of homogenization of the Navier--Stokes equations has been intensively studied by many authors. There are some works in this field. The homogenization of the incompressible Navier--Stokes equations in periodic perforated domains was presented  by analytical method~\cite{Al,Al1,Fe,Fe1,Hm1,Lu2,Sp,Ya}.   The homogenization of the compressible Navier--Stokes equations in periodic perforated domains was proved  by similar method~\cite{Be,Hm,Lu,Lu1,Ma,Ne,Ne1,Os1,Po}.  However, stochastic homogenization is a very active study field. One of the study problems in stochastic homogenization is  the partial differential equations with random oscillation coefficients.  The stochastic  homogenization results were presented by analytical method~\cite{Bes1,Bes2,Ha,Su1}, probabilistic approach~\cite{Di,Pa,Su}, and chaos expansion method~\cite{Ba,Ko,Zh}. There are some works on the  homogenization of stochastic partial differential equations.  The authors~\cite{Ic1,Ic2,Ra1,Ra2} concerned mainly stochastic parabolic equations taking place in periodic setting, almost periodic setting and ergodic setting. The homogenization of stochastic partial differential equations by two--scale convergence method~\cite{Wa2,Mo1} and Tartar's oscillating test function approach~\cite{Wa1,Mo2}. There are some nice works on the homogenization of partial differential equations in randomly perforated domains. The authors~\cite{Am1} showed the stochastic homogenization immiscible compressible two--phase flow  by stochastic two--scale convergence method.  Homogenization for the Poisson equations and Stokes equations in randomly perforated domains was studied by the authors~\cite{Gi,Gi1}. Giunti~\cite{Gi2} derived the Darcy's law in randomly perforated domains by analytical method. Homogenization of the linearized ionic transport equations in randomly perforated domains was studied by the authors \cite{Mi}.  It is worth to focus on the stochastic homogenization of the fluid equations.  Wright~\cite{Wr,Wr1} showed the homogenization of incompressible fluids randomly perforated domains by stochastic two--scale convergence in mean  approach. The authors~\cite{Be1,Os} showed the homogenization of the compressible Navier--Stokes equations in randomly perforated domains by constructing Bogovski\u l operator approach. Bessaih \& Maris~\cite{Bes3} proved the homogenization  for the stochastic  Navier--Stokes with a stochastic slip boundary condition by two--scale convergence approach. Shi \& Gao~\cite{Shi} gave the homogenization of stochastic Ginzburg--Landau equation on the half--line with fast boundary fluctuation by analytical method.  The stochastic homogenization of fluid equations with highly oscillating boundary or domains was proved by the above authors. Up to our knowledge, there are very few works for the stochastic homogenization of fluid equation with  random  coefficients.

The novelties are as follow:  due to the fact that the Navier--Stokes equations have strong nonlinearity, it follows that the tightness of solution is difficult to derive directly.  To overcome it, we adopt some  analytical methods to obtain the tightness of solution. It is a fact that the limit of the product of two weakly convergent sequences does not exist. We use the Skorohod theorem and Birkhoff ergodic theorem to solve the problem. Since the rapid oscillation nonlinear coefficient $\tilde{\sigma}^\varepsilon(t,u^\varepsilon(t))$, it is difficult to get directly the limit as $\varepsilon\rightarrow0$.  Utilizing the Stratonovich--Khasminskii averaging principle approach yields the  the limit  of nonlinear coefficient as $\varepsilon\rightarrow0$.

In the following parts, the positive constants $C$ and $C_T$ maybe change from line to line.

The paper is organized as follows: Section \ref{sec:Pr} presents some necessary definitions notations, assumption, and main result (Theorem \ref{main-theom1}).  Section \ref{sec:tig-woutsf} introduces the tightness of solution for the system  (\ref{p-main-equ1}). The  proof of Theorem \ref{main-theom1}  is derived in the last section.
\section {Preliminaries and assumptions}\label{sec:Pr}
We give a complete probability space $(\Omega, \mathcal{F},\mathbb{P})$ and denote by $\mathbb{E}$ the expectation operator with respect to $\mathbb{P}$. The probability space equipped with an ergodic dynamical system $\mathbf{T}_x, x\in\mathbb{R}^n$, that is , a group of measurable maps $\mathbf{T}_x: \Omega\rightarrow\Omega$ such that

$(i)\;\;\mathbf{T}_{x+y}=\mathbf{T}_x\cdot\mathbf{T}_y,\;\; x,y\in \mathbb{R}^d,\;\;\mathbf{T}_{0}={\rm Id}$;

$(ii)\;\;\mathbb{P}(\mathbf{T}_xA)=\mathbb{P}(A)\;\;$ for all$\;\; x\in \mathbb{R}^d,\;\;A\in \mathcal{F}$;

$(iii)\;\mathbf{T}_x(\omega): \mathbb{R}^d\times \Omega\mapsto\Omega$ is a measurable map from $(\mathbb{R}^d\times \Omega,\mathcal{B}\times\mathcal{F}) $ to $(\Omega,\mathcal{F})$ where $\mathcal{B}$ is the Borel $\sigma$-algebra;

$(iv)$   If $A\in\mathcal{F}$  is invariant with respect to $\mathbf{T}_x,\;x\in \mathbb{R}^d$,  then $\mathbb{P}(A)=0$ or $1$\,.\\

\begin{definition}\label{def}
(statistically homogeneous, \cite{Ch,Zhi}) A random field $\tilde{f}(x,\omega),\;x\in\mathbb{R}^n,\;\omega\in\Omega$ is said to be statistically homogeneous if $\tilde{f}(x,\omega)=\tilde{f}(\mathbf{T}_x\omega)$,\;$\tilde{f}(\omega)\in L^2(\Omega)$ where $\mathbf{T}_x$ is a dynamical system in $\Omega$.
\end{definition}

Let $u\in L^2(\mathbb{T}^2;\mathbb{R}^2)$ such that $u$ can be written as a Fourier series, that is
\begin{eqnarray*}\label{pre1-equ}
u(x)=\sum_{s\in \mathbb{Z}^2}u_se^{is\cdot x}
\end{eqnarray*}
with $u_s \in \mathbb{C}^2$ and $u_{-s}=\bar{u}_s$. If $u(x)\in C^\infty(\mathbb{T}^2;\mathbb{R}^2)$, then
\begin{eqnarray*}\label{pre11-equ}
\mathrm{div}\; u(x)=\sum_{s\in \mathbb{Z}^2} is\cdot u_se^{is\cdot x}.
\end{eqnarray*}
We denote by the following space $H$,
\begin{eqnarray*}\label{pre-equ}
H=\left\{u\in L^2(\mathbb{T}^2;\mathbb{R}^2)\mid \mathrm{div}\; u=0\;\; \mathrm{in}\;\;\mathbb{T}^2\;\; \mathrm{and}\;\; \int_{\mathbb{T}^2}u(x)dx=0\right\},
\end{eqnarray*}
with the norm $\|\cdot\|_{L^2}$, the following equation
\begin{eqnarray*}\label{pre-equ1}
H=\left\{u(x)=\sum_{s\in\mathbb{Z}_0^2}u_se^{is\cdot x}\in L^2(\mathbb{T}^2;\mathbb{R}^2)\mid u_{-s}=\bar{u}_s\;\;\mathrm{and}\;\;s\cdot u_s=0\right\}
\end{eqnarray*}
holds, where $\mathbb{Z}_0^2=\mathbb{Z}^2\backslash\{0\}$. The space $H$ is a Hilbert space with inner product $\langle\cdot,\cdot\rangle$.

Next, we introduce a complete orthonormal basis of space $H$ and define
\begin{eqnarray*}\label{pre-equ2}
\mathbb{Z}_{+}^2=\{(s_1,s_2)\mid s_1>0\;\;\mathrm{or}\;\; s_1=0,\;\;s_2>0\},
\end{eqnarray*}
then,
\begin{eqnarray*}\label{pre-equ3}
\mathbb{Z}_{0}^2=\mathbb{Z}_{+}^2\cup(-\mathbb{Z}_{+}^2),\;\;\mathbb{Z}_{+}^2\cap(-\mathbb{Z}_{+}^2)=\emptyset.
\end{eqnarray*}
We define the following set of vectors $\{e_s(x)\mid s\in\mathbb{Z}_{0}^2 \}$,
\begin{eqnarray*}\label{pre-equ4}
e_s(x)=
\begin{cases}
c_s s^{\bot}\sin(s\cdot x),& s\in\mathbb{Z}_{+}^2,\\
c_s s^{\bot}\cos(s\cdot x),& s\in-\mathbb{Z}_{+}^2,
\end{cases}
\end{eqnarray*}
where $c_s=\frac{1}{\sqrt{2}\pi\mid s\mid}$ and if $s=(s_1,s_2)$, then $s^{\bot}=(-s_2,s_1)$. The set $\{e_s(x)\}$ is a Hilbert basis of space $H$.

Let $\Pi$ be the Leray-Helmholtz projection of $L^2(\mathbb{T}^2;\mathbb{R}^2)\rightarrow H$.
\begin{eqnarray*}\label{pre-equ5}
Au:=-\Pi\Delta u
\end{eqnarray*}
is a Stokes operator and $Au=-\Delta u$ in the space-periodic case. $\{e_s(x)\}_{s\in \mathbb{Z}_0^2}$ is a set of eigenvectors of $A$, that is \begin{eqnarray*}\label{pre-equ6}
A e_s=\mid s\mid^2 e_s,\;\;s\in \mathbb{Z}_0^2.
\end{eqnarray*}
For $s\geq0$, we define the spaces as
\begin{eqnarray*}\label{pre-equ7}
H_0^s(\mathbb{T}^2;\mathbb{R}^2)=\left\{u\in H^s(\mathbb{T}^2;\mathbb{R}^2)\mid \int_{\mathbb{T}^2}u(x)dx=0\right\},
\end{eqnarray*}
\begin{eqnarray*}\label{pre-equ8}
V^s=H^s(\mathbb{T}^2;\mathbb{R}^2)\cap H,
\end{eqnarray*}
\begin{eqnarray*}\label{pre-equ9}
X^s=C(0,T;V^s).
\end{eqnarray*}
If $u\in H_0^s(\mathbb{T}^2;\mathbb{R}^2)$, then the norm is defined equivalently as
\begin{eqnarray*}\label{pre-equ10}
\|u\|_s^2=\sum_{k\in\mathbb{Z}_0^2}\mid u_k\mid^2\mid k \mid^{2s},
\end{eqnarray*}
we obtain easily the $\|u\|_s\cong\|A^{\frac{s}{2}}u\|$.

As for the stochastic force, we assume that $\{W(t)\}$ is cylindrical $Q$--Winner processes.  It is written as
\begin{eqnarray*}\label{en1}
W(t)=\sum_{s=1}^\infty Q e_s \beta_s(t),
\end{eqnarray*}
where $\{\beta_s(t)\}_{s\geq 1}$ is  sequence of standard Brownian motion on $(\Omega, \mathcal{F},\mathbb{P})$, $Q: H\rightarrow H$ is bounded linear operator.
For any two separable Hilbert spaces $E$ and $F$, we denote by $\mathcal{L}_2(E,F)$ the space of Hilbert--Schmidt operators from $E$ to $F$. The Hilbert space $\mathcal{L}_2(E,F)$ is  endowed with the inner product
\begin{eqnarray*}\label{en4}
\langle \tilde{A},\tilde{B}\rangle_{\mathcal{L}_2(E,F)}=\mathrm{Tr}_E[\tilde{A}^\top \tilde{B}]=\mathrm{Tr}_F[\tilde{B} \tilde{A}^\top].
\end{eqnarray*}
Now, applying the projection $\Pi$ to system (\ref{main-equ1}) yields
\begin{eqnarray}\label{p-main-equ1}
\begin{cases}
{u}_{t}^{\varepsilon}(t)+[A u^{\varepsilon}(t)+B(u^{\varepsilon}(t))]={q}^{\varepsilon}(x,\omega)u^{\varepsilon}(t)+{\sigma}^\varepsilon(t,u^\varepsilon(t))dW(t),\\
\mathrm{div}\; u^\varepsilon(t)=0,\\
u^{\varepsilon}(0)=u_0(x).\label{p-main-equ1-ini}
\end{cases}
\end{eqnarray}
Here $B(u)=\Pi(\langle u,\nabla \rangle)u$, $qu=\Pi(\tilde{q}u)$ and $\sigma=\Pi(\tilde{\sigma})$. Next, we give the following assumptions of equations (\ref{p-main-equ1}).

$(\mathbf{H_1})$ ${q}(x,\omega), x\in \mathbb{R}^d$   is stationary ergodic and statistically homogeneous random field on
$(\Omega, \mathcal{F},\mathbb{P})$,
that is
\begin{equation*}
{q}(x,\omega)={q}{(\mathbf{T}_x\omega)}.
\end{equation*}

$(\mathbf{H_2}) $  For  all $\omega \in \Omega$, there is a constant $C> 0$\,such that
\begin{eqnarray*}\label{q}
|q(x,\omega)|\leq C.
\end{eqnarray*}

$(\mathbf{H_3}) $ The ${\sigma}(t,h),\partial_h({\sigma}(t,h)),\partial_h^2({\sigma}(t,h))\in \mathcal{L}_2(H_{Q},H),t\in(0,T),H_{Q}=QH$ are  bounded, that is for all $T>0$,
\begin{eqnarray*}\label{en5}
\sup_{t\in(0,T), h\in H}\|{\sigma}(t,h)\|_{\mathcal{L}_2(H_{Q},H)}<\infty,
\end{eqnarray*}
\begin{eqnarray*}\label{en51}
\sup_{t\in(0,T), h\in H}\|\partial_h({\sigma}(t,h))\|_{\mathcal{L}_2(H_{Q},H)}<\infty,
\end{eqnarray*}
and
\begin{eqnarray*}\label{en52}
\sup_{t\in(0,T), h\in H}\|\partial_h^2({\sigma}(t,h))\|_{\mathcal{L}_2(H_{Q},H)}<\infty.
\end{eqnarray*}
 For all $h_1,h_2 \in H$,
\begin{eqnarray*}\label{en6}
\|{\sigma}(t,h_1)-{\sigma}(t,h_2)\|_{\mathcal{L}_2(H_{Q},H)}^2\leq L\|h_1-h_2\|_{H}^2,
\end{eqnarray*}
where $L > 0$ is a Lipschitz constant. The ${\sigma}(t,\cdot)$ is periodic with respect to the variable $t$. The well--posedness of systems (\ref{p-main-equ1}) can be proved by the Galerkin approach~\cite{Ku1} and the monotone method~\cite{Me}.

Then we present our main results.
\begin{theorem}\label{main-theom1}
Under the assumptions $(\mathbf{H_{1}})$--$(\mathbf{H_{3}})$. For every  initial conditions $u_0\in V^2$,  for any $T>0$, the solution $\{u^{\varepsilon}(\cdot,\cdot)\}$ to equations (\ref{p-main-equ1}) converges in distribution to $u(\cdot,\cdot)$ in space $C(0,T;X^1)$, as $\varepsilon \to 0$, $u(\cdot,\cdot)$ is a solution of the following equation
\begin{eqnarray*}\label{eff-equ1}
\begin{cases}
{u}_{t}(t)+[A u(t)+B(u(t))]=\bar{q}u(t)+\bar{\sigma}(u(t))dW(t),\\
\mathrm{div}\; u(t)=0,\\
u(0)=u_0(x),\label{eff-equ1-ini}
\end{cases}
\end{eqnarray*}
where $\{W(t)\}$ is cylindrical Wiener processes, $\bar{\sigma}(\cdot)=\frac{1}{T}\int_0^T\sigma_i(t,\cdot)dt$ and $\bar{q}=\mathbb{E}q(0,\omega).$
\end{theorem}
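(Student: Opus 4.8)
The plan is to establish the convergence in distribution by combining a priori estimates, tightness, a Skorohod-type representation, passage to the limit in the nonlinearity via the Birkhoff ergodic theorem, and identification of the averaged stochastic term through the Stratonovich--Khasminskii averaging principle. First I would derive uniform-in-$\varepsilon$ a priori bounds: testing equation (\ref{p-main-equ1}) against $u^\varepsilon$, $Au^\varepsilon$ and $A^2u^\varepsilon$ and using It\^o's formula together with $(\mathbf{H_2})$ (boundedness of $q$) and $(\mathbf{H_3})$ (boundedness of $\sigma$), I expect to obtain
\begin{eqnarray*}
\mathbb{E}\sup_{t\in[0,T]}\|u^\varepsilon(t)\|_{V^2}^2+\mathbb{E}\int_0^T\|u^\varepsilon(t)\|_{V^3}^2\,dt\leq C_T,
\end{eqnarray*}
uniformly in $\varepsilon$, where the key point is that the random potential term $q^\varepsilon u^\varepsilon$ is handled by the uniform bound $|q^\varepsilon|\leq C$ so that $\varepsilon$ does not enter the constants. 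Fractional time-regularity estimates on $u^\varepsilon$ (coming from the equation written in mild or variational form, with the stochastic integral estimated by the Burkholder--Davis--Gundy inequality) then give equicontinuity in time.

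Second, I would use these bounds plus the compact embedding $V^3\hookrightarrow\hookrightarrow V^1$ (and $V^2\hookrightarrow\hookrightarrow V^1$) together with an Aubin--Lions--Simon argument to show that the laws of $\{u^\varepsilon\}$ are tight in $C(0,T;X^1)=C(0,T;V^1)$; this is precisely the tightness asserted to be proved in Section \ref{sec:tig-woutsf}. I would also include the driving noise $W$ (and, if needed, the auxiliary processes recording $q^\varepsilon$ via its additive ergodic averages) in the tightness argument so that the joint laws are tight. By Prokhorov's theorem there is a subsequence converging weakly, and by the Skorohod representation theorem one obtains, on a new probability space, random variables $(\tilde u^\varepsilon,\tilde W^\varepsilon)\to(\tilde u,\tilde W)$ almost surely in $C(0,T;V^1)\times C(0,T;H)$, with $\tilde u^\varepsilon$ solving the same system driven by $\tilde W^\varepsilon$.

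Third, I would pass to the limit in each term. The linear terms $Au^\varepsilon$ converge in the distributional sense from the uniform $V^3$ bound; the nonlinear term $B(u^\varepsilon)$ converges because $u^\varepsilon\to u$ strongly in $C(0,T;V^1)$ and the bilinear form $B$ is continuous from $V^1\times V^2$ (using the $V^2$ bound to control one factor) — here the strong convergence furnished by Skorohod is essential since products of merely weakly convergent sequences do not pass to the limit. For the random potential term, writing $q^\varepsilon(x,\omega)u^\varepsilon=q(\tfrac{x}{\varepsilon},\omega)u^\varepsilon$, I would combine the strong convergence $u^\varepsilon\to u$ with the Birkhoff ergodic theorem, which gives $q(\tfrac{x}{\varepsilon},\cdot)\rightharpoonup \mathbb{E}q(0,\omega)=\bar q$ weakly-$\ast$ in $L^\infty$ (a.s.), so that the product converges to $\bar q u$. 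Finally, for the rapidly oscillating stochastic coefficient $\sigma^\varepsilon(t,u^\varepsilon)=\sigma(\tfrac{t}{\varepsilon},u^\varepsilon)$, I would invoke the Stratonovich--Khasminskii averaging principle: because $\sigma(t,\cdot)$ is $T$-periodic in $t$, bounded with two bounded $h$-derivatives and Lipschitz (assumption $(\mathbf{H_3})$), the martingale $\int_0^t\sigma(\tfrac{s}{\varepsilon},u^\varepsilon(s))\,dW(s)$ converges (in the sense of the martingale problem, equivalently via convergence of quadratic variations $\int_0^t\sigma\sigma^\top(\tfrac{s}{\varepsilon},u^\varepsilon(s))\,ds\to\int_0^t\overline{\sigma\sigma^\top}(u(s))\,ds$) to a stochastic integral against the averaged coefficient $\bar\sigma(\cdot)=\tfrac1T\int_0^T\sigma(t,\cdot)\,dt$ with respect to a (possibly new) cylindrical Wiener process $\tilde W$. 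Identifying the limit as a solution of the martingale problem for the effective equation, and invoking pathwise uniqueness for the 2D effective Navier--Stokes system (which holds under $u_0\in V^2$, boundedness of $\bar q$ and Lipschitz/boundedness of $\bar\sigma$) via a Gy\"ongy--Krylov argument, upgrades the subsequential weak convergence to convergence in distribution of the full family, completing the proof.

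\textbf{Main obstacle.} The delicate step is the treatment of the oscillating stochastic term $\sigma(\tfrac{t}{\varepsilon},u^\varepsilon)\,dW$: one cannot simply pass to a weak limit inside a stochastic integral, and the time-averaging must be carried out simultaneously with the (state-dependent) homogenization of the drift. Making the Stratonovich--Khasminskii averaging rigorous in this infinite-dimensional, nonlinear setting — in particular controlling the corrector/auxiliary problem and showing convergence of the bracket process uniformly enough to identify the limiting martingale — is where the bulk of the work lies; the uniform a priori estimates and tightness, while technical, are comparatively standard for 2D Navier--Stokes.
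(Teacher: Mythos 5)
Your proposal follows the same overall skeleton as the paper -- uniform a priori estimates in $V^1$, $V^2$ and $V^3$ via It\^o's formula and Burkholder--Davis--Gundy, a fractional time-regularity estimate, tightness in $C(0,T;V^1)$ through Simon's compactness lemma, Skorohod representation, Birkhoff's ergodic theorem for the potential term $q(\tfrac{x}{\varepsilon},\omega)u^\varepsilon$, and continuity of $B$ in $V^1$ for the nonlinearity -- so on those steps the two arguments coincide. The genuine divergence is in the treatment of the oscillating noise coefficient. The paper handles $\int_0^t\sigma(\tfrac{s}{\varepsilon},u^\varepsilon(s))\,dW(s)$ directly: it keeps the \emph{same} Wiener process, partitions $[0,T]$ into intervals of length $\delta$, freezes $u^\varepsilon$ at the left endpoints, and controls the four resulting error terms using the Lipschitz property of $\sigma$, the H\"older continuity estimate $\mathbb{E}\|u^\varepsilon(t)-u^\varepsilon(s)\|^2\leq C|t-s|$, and the periodicity of $\sigma(\cdot,h)$ in $t$ together with a classical averaging lemma (this is the Khasminskii discretization applied inside the stochastic integral). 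You instead propose a martingale-problem formulation with convergence of the quadratic variations $\int_0^t\sigma\sigma^\top(\tfrac{s}{\varepsilon},u^\varepsilon)\,ds$ and a Gy\"ongy--Krylov argument to upgrade subsequential convergence; the paper does neither (it stops at identifying the limit of a subsequence and does not introduce a new driving noise). Your route is more robust in that it does not presuppose that the limiting integrand is literally $\bar\sigma=\tfrac1T\int_0^T\sigma(t,\cdot)\,dt$ driven by the original $W$ -- indeed the quadratic variation averages to $\overline{\sigma\sigma^\top}$, which generally differs from $\bar\sigma\bar\sigma^\top$, so you should be careful that the martingale-problem identification you sketch is actually consistent with writing the limit as $\bar\sigma(u)\,dW$; the paper's direct discretization avoids this issue by construction (at the price of relying on the strong statement that $\int\sigma(\tfrac{s}{\varepsilon},h)\,dW(s)\to\int\bar\sigma(h)\,dW(s)$ for frozen $h$). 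The Gy\"ongy--Krylov upgrade you add is a genuine strengthening over what the paper writes down explicitly.
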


We need the following results~\cite{Ku1}.
\begin{lemma}\label{lemma-ine}
(Ladyzhenskaya inequality) Let $u\in H^1(D)$ such that
\begin{eqnarray*}\label{lemma-ine-equ}
\|u\|_{L^4}\leq C\|u\|_{\frac{1}{2}}\leq C \sqrt{\|u\|\
\|u\|_1},
\end{eqnarray*}
where $C$ is a constant.
\end{lemma}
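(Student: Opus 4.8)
The statement chains two independent estimates, and I would prove each separately. Write $\|u\|=\|u\|_0$ for the $L^2$ norm and work with the Fourier representation $u(x)=\sum_{k\in\mathbb{Z}_0^2}u_ke^{ik\cdot x}$ together with the definition $\|u\|_s^2=\sum_{k\in\mathbb{Z}_0^2}|u_k|^2|k|^{2s}$ already fixed in the text. Then the right-hand inequality $\|u\|_{1/2}\le C\sqrt{\|u\|\,\|u\|_1}$ is a pure interpolation statement between $L^2$ and $H^1$, while the left-hand inequality $\|u\|_{L^4}\le C\|u\|_{1/2}$ is the critical Sobolev embedding $H^{1/2}(\mathbb{T}^2)\hookrightarrow L^4(\mathbb{T}^2)$ in dimension two. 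I expect the second of these to be the genuine analytic content.

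For the interpolation inequality I would argue directly on the Fourier side. Since $\|u\|_{1/2}^2=\sum_k|u_k|^2|k|$, I split each summand as $|u_k|^2|k|=|u_k|\cdot\bigl(|u_k|\,|k|\bigr)$ and apply the Cauchy--Schwarz inequality over $k\in\mathbb{Z}_0^2$, obtaining
\[
\sum_k|u_k|^2|k|\le\Bigl(\sum_k|u_k|^2\Bigr)^{1/2}\Bigl(\sum_k|u_k|^2|k|^2\Bigr)^{1/2}=\|u\|\,\|u\|_1 .
\]
Taking square roots gives $\|u\|_{1/2}\le\sqrt{\|u\|\,\|u\|_1}$ with constant one; this is just the logarithmic convexity of $s\mapsto\|u\|_s^2$ at the midpoint between $s=0$ and $s=1$.

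The embedding $\|u\|_{L^4}\le C\|u\|_{1/2}$ is the main obstacle, being the scaling-critical endpoint: on $\mathbb{R}^2$ both $\|u\|_{L^4}$ and the homogeneous seminorm $\|u\|_{\dot H^{1/2}}$ carry the same dilation weight $\lambda^{-1/2}$ under $u(\cdot)\mapsto u(\lambda\,\cdot)$, so there is no slack to exploit. I would establish it by the classical Ladyzhenskaya/Gagliardo--Nirenberg argument and then transfer to the torus: for smooth $u$ with suitable decay the fundamental theorem of calculus yields $u^2(x_1,x_2)\le 2\int_{\mathbb{R}}|u\,\partial_1u|\,dy_1$ and, symmetrically, a bound by an integral of $|u\,\partial_2u|$ in the $x_2$ direction; multiplying the two one-dimensional bounds, integrating over $\mathbb{R}^2$ and applying Cauchy--Schwarz produces $\|u\|_{L^4}^4\le C\|u\|^2\|\nabla u\|^2$. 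To recover the sharper displayed form with the intermediate $H^{1/2}$ norm I would instead invoke the Sobolev embedding theorem $H^{s}\hookrightarrow L^p$ at the endpoint $s=d(\tfrac12-\tfrac1p)$ with $d=2$, $p=4$, $s=\tfrac12$, using that for the zero-mean functions comprising $H$ and $H_0^s$ the homogeneous and inhomogeneous $H^{1/2}$ norms are equivalent (since $|k|\ge1$ on $\mathbb{Z}_0^2$); a self-contained Littlewood--Paley or Plancherel estimate of $\|u^2\|_{L^2}=\|u\|_{L^4}^2$ is an alternative. Composing the two inequalities then reproduces the stated chain, the only delicate points being the passage from $\mathbb{R}^2$ to $\mathbb{T}^2$, handled by periodicity and the zero-mean normalisation built into $H$, and the bookkeeping of constants, which depend only on the domain.
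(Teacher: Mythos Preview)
The paper does not prove this lemma at all: it is stated immediately after the phrase ``We need the following results~\cite{Ku1}'' and is simply quoted from Kuksin's monograph, with no argument given. Your proposal therefore supplies strictly more than the paper does, and the two-step decomposition you outline---Cauchy--Schwarz on the Fourier side for the interpolation $\|u\|_{1/2}^2\le\|u\|\,\|u\|_1$, and the critical Sobolev embedding $H^{1/2}(\mathbb{T}^2)\hookrightarrow L^4(\mathbb{T}^2)$ for the first inequality---is correct and is the standard route. The only remark is that your Gagliardo--Nirenberg computation already yields $\|u\|_{L^4}^2\le C\|u\|\,\|u\|_1$ directly, which is the composite inequality the paper actually uses later (via Lemma~\ref{lemma-b}(iv)); the intermediate $H^{1/2}$ step is not exploited anywhere in the paper and could be bypassed if you wished.
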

\begin{lemma}\label{lemma-ine1}
(Interpolation inequality) Let $a < b$ be any real number and $0\leq \theta\leq 1$ be a constant such that
\begin{eqnarray*}\label{lemma-ine1-equ}
\|u\|_{\theta a+(1-\theta)b}\leq \|u\|_{a}^\theta\|u\|_b^{1-\theta},
\end{eqnarray*}
where $u\in H^b(D)$.
\end{lemma}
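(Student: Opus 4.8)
The plan is to exploit the fact that on $\mathbb{T}^2$ every norm $\|\cdot\|_s$ is diagonal in the eigenbasis of the Stokes operator $A$, so the inequality reduces to a pointwise (in the frequency variable) factorization followed by a single application of H\"older's inequality. Writing $u=\sum_{k\in\mathbb{Z}_0^2}u_k e_k$ and using the exact identity $\|u\|_s^2=\|A^{s/2}u\|^2=\sum_{k\in\mathbb{Z}_0^2}|u_k|^2|k|^{2s}$ valid since $A e_k=|k|^2 e_k$, I would set $s=\theta a+(1-\theta)b$ and start from
\begin{eqnarray*}
\|u\|_s^2=\sum_{k\in\mathbb{Z}_0^2}|u_k|^2|k|^{2s}.
\end{eqnarray*}
The key observation is that each summand factors as
\begin{eqnarray*}
|u_k|^2|k|^{2s}=\bigl(|u_k|^2|k|^{2a}\bigr)^{\theta}\bigl(|u_k|^2|k|^{2b}\bigr)^{1-\theta},
\end{eqnarray*}
because $(|u_k|^2)^{\theta}(|u_k|^2)^{1-\theta}=|u_k|^2$ and $|k|^{2a\theta}|k|^{2b(1-\theta)}=|k|^{2s}$. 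In the operator formulation $\|u\|_s\cong\|A^{s/2}u\|$ this is nothing but the statement that the spectral decomposition of the positive self-adjoint operator $A$ makes the same splitting available.

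For the core estimate I would apply H\"older's inequality to the series $\sum_k\alpha_k\beta_k$ with $\alpha_k=(|u_k|^2|k|^{2a})^{\theta}$, $\beta_k=(|u_k|^2|k|^{2b})^{1-\theta}$ and conjugate exponents $p=1/\theta$, $q=1/(1-\theta)$, which satisfy $1/p+1/q=\theta+(1-\theta)=1$ for $0<\theta<1$. Since $\alpha_k^{p}=|u_k|^2|k|^{2a}$ and $\beta_k^{q}=|u_k|^2|k|^{2b}$, this yields
\begin{eqnarray*}
\|u\|_s^2\le\Bigl(\sum_{k\in\mathbb{Z}_0^2}|u_k|^2|k|^{2a}\Bigr)^{\theta}\Bigl(\sum_{k\in\mathbb{Z}_0^2}|u_k|^2|k|^{2b}\Bigr)^{1-\theta}=\|u\|_a^{2\theta}\,\|u\|_b^{2(1-\theta)},
\end{eqnarray*}
and taking square roots gives exactly $\|u\|_{\theta a+(1-\theta)b}\le\|u\|_a^{\theta}\|u\|_b^{1-\theta}$. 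The two endpoint cases $\theta=0$ and $\theta=1$ are immediate, since then $s=b$ or $s=a$ and the claimed inequality degenerates into an equality.

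Before invoking H\"older I would record that all three norms are finite: because $k\in\mathbb{Z}_0^2$ forces $|k|\ge 1$ and $a<b$, we have $|k|^{2a}\le|k|^{2b}$, whence $\|u\|_a\le\|u\|_b<\infty$ for $u\in H^b$, and $\|u\|_s$ is likewise dominated, so no series diverges and the inequality applies legitimately. In the periodic setting used throughout the paper this argument is essentially a one-line consequence of H\"older's inequality, so there is no serious obstacle; the only points requiring care are the exponent bookkeeping in the factorization and the (trivial) separate treatment of the endpoints. The single place where the argument would become genuinely more delicate is if one insisted on the inequality for a general domain $D$ rather than the torus: there the clean diagonalization in a Fourier/eigenbasis is unavailable, and one would instead have to appeal to complex interpolation of the Sobolev scale $\{H^s(D)\}$ or to the spectral calculus of the associated elliptic operator, which is the only abstract ingredient that would need to be imported.
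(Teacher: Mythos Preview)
Your argument is correct: the factorization $|u_k|^2|k|^{2s}=(|u_k|^2|k|^{2a})^{\theta}(|u_k|^2|k|^{2b})^{1-\theta}$ followed by H\"older's inequality with exponents $1/\theta$ and $1/(1-\theta)$ is exactly the standard proof in the periodic setting, and your remarks on the endpoints $\theta\in\{0,1\}$ and on finiteness of the norms for $u\in H^b$ are accurate. Note, however, that the paper does not supply its own proof of this lemma; it is simply quoted as a known result from \cite{Ku1}, so there is no ``paper's approach'' to compare against---your write-up is precisely the argument one would expect in that reference.
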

\begin{lemma}\label{lemma-b}
We set $B(u,v)=\Pi(\langle u,\nabla\rangle)v$, so, $B(u)=B(u,u)$. If $u,v,w\in C^\infty(\mathbb{T}^2)\cap H$, then

$(i)~(B(u,v),v)=0,\;\;(B(u,v),w)= -(B(u,w),v).$

$(ii)~(B(u),\Delta u)=0.$

$(iii)$~For all $m\in\mathbb{N}$, we have
\begin{eqnarray*}\label{lemma-b-equ}
\mid (A^m u, B(u))\mid\leq C\|u\|_{m+1}^{\frac{4m-1}{2m}}\|u\|_1^{\frac{m+1}{2m}}\|u\|^{\frac{1}{2}}.
\end{eqnarray*}

Furthermore, for all $\delta> 0$, there exists a positive constant $C$ relating to $m$ and $\delta$ such that
\begin{eqnarray*}\label{lemma-b-equ1}
\mid (A^m u, B(u))\mid\leq \delta\|u\|_{m+1}^{2}+C(m,\delta)\|u\|_1^{2m+2}\|u\|^{2m}.
\end{eqnarray*}

$(iv)$~ \begin{eqnarray*}\label{pre-equ4}
\mid (B(u,v),w)\mid\leq
\begin{cases}
C\|u\|_{\frac{1}{2}}\|v\|_{\frac{1}{2}}\|w\|_1,\\
C\|u\|_{L^4}\|v\|_1\|w\|^{\frac{1}{2}}\|w\|_1^{\frac{1}{2}}.
\end{cases}
\end{eqnarray*}

$(v)$~There exists a positive constant $C$  such that
\begin{eqnarray*}\label{lemma-b-equ2}
\|B(u)-B(v)\|_{-1}\leq C(\|u\|_1+\|v\|_1)\|u-v\|_1.
\end{eqnarray*}
\end{lemma}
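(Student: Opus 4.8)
The plan is to prove the five items in order of increasing difficulty; all of them rest on integration by parts over $\mathbb{T}^2$ (where no boundary terms appear), the incompressibility $\mathrm{div}\,u=0$, and the interpolation inequalities of Lemmas \ref{lemma-ine} and \ref{lemma-ine1}. For (i), since $v,w\in H$ are divergence-free and $\Pi$ is the self-adjoint projection onto $H$, the projection can be dropped against such test fields, so $(B(u,v),w)=\int_{\mathbb{T}^2}(\langle u,\nabla\rangle v)\cdot w\,dx$. I would then use $(\langle u,\nabla\rangle v)\cdot w+(\langle u,\nabla\rangle w)\cdot v=\langle u,\nabla\rangle(v\cdot w)$ and integrate by parts to obtain $(B(u,v),w)+(B(u,w),v)=-\int_{\mathbb{T}^2}(\mathrm{div}\,u)(v\cdot w)\,dx=0$, the identity $(B(u,v),v)=0$ being the case $w=v$. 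For (ii), note first that $\Delta u$ is divergence-free with zero mean, so $(B(u),\Delta u)=\int_{\mathbb{T}^2}(\langle u,\nabla\rangle u)\cdot\Delta u\,dx$. The key is the two-dimensional Euler identity: writing the scalar vorticity $\zeta=\partial_1 u_2-\partial_2 u_1$ and $u^{\bot}=(-u_2,u_1)$, one has $\langle u,\nabla\rangle u=\nabla(\tfrac12|u|^2)+\zeta\,u^{\bot}$ and $\Delta u=(-\partial_2\zeta,\partial_1\zeta)$ for divergence-free $u$. The gradient part integrates to zero against $\Delta u$ because $\mathrm{div}(-\partial_2\zeta,\partial_1\zeta)=0$, while the remaining term equals $\int_{\mathbb{T}^2}\zeta\,(\langle u,\nabla\rangle\zeta)\,dx=\tfrac12\int_{\mathbb{T}^2}\langle u,\nabla\rangle(\zeta^2)\,dx=0$ by incompressibility; this enstrophy cancellation gives $(B(u),\Delta u)=0$.

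For (iv), both bounds are Hölder plus Ladyzhenskaya. The second is direct: $|(B(u,v),w)|\le\|u\|_{L^4}\|\nabla v\|_{L^2}\|w\|_{L^4}\le C\|u\|_{L^4}\|v\|_1\|w\|^{1/2}\|w\|_1^{1/2}$, applying Lemma \ref{lemma-ine} to $\|w\|_{L^4}$. For the first, I would invoke the antisymmetry of (i), $(B(u,v),w)=-(B(u,w),v)$, so that $|(B(u,v),w)|\le\|u\|_{L^4}\|\nabla w\|_{L^2}\|v\|_{L^4}\le C\|u\|_{1/2}\|v\|_{1/2}\|w\|_1$, again by Lemma \ref{lemma-ine}. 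Item (v) then follows by duality: I would decompose $B(u)-B(v)=B(u,u-v)+B(u-v,v)$, pair with a test field $\phi$ normalized by $\|\phi\|_1\le1$, apply the first bound of (iv) to each term, and use $\|\cdot\|_{1/2}\le\|\cdot\|_1$ (valid since every nonzero Fourier mode has $|k|\ge1$) to conclude $\|B(u)-B(v)\|_{-1}\le C(\|u\|_1+\|v\|_1)\|u-v\|_1$.

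The main obstacle is (iii). I would first reduce it, as above, to $(A^m u,B(u))=\int_{\mathbb{T}^2}(-\Delta)^m u\cdot(\langle u,\nabla\rangle u)\,dx$, then integrate by parts to redistribute the $2m$ derivatives so that the highest-order factor carries at most $m+1$ of them; expanding by the Leibniz rule produces a finite sum of trilinear terms of the schematic form $\int_{\mathbb{T}^2}(\nabla^{m+1}u)(\nabla^{a}u)(\nabla^{b}u)$ with $a+b=m$. To each such term I would apply Hölder together with the interpolation inequality (Lemma \ref{lemma-ine1}) to convert the intermediate Sobolev norms into powers of $\|u\|$, $\|u\|_1$ and $\|u\|_{m+1}$; balancing the interpolation exponents yields the stated bound $C\|u\|_{m+1}^{(4m-1)/2m}\|u\|_1^{(m+1)/2m}\|u\|^{1/2}$. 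The delicate point is the exact bookkeeping of these exponents, which is precisely where the unusual powers originate. Finally, since $(4m-1)/2m=2-1/2m<2$, Young's inequality with conjugate exponents $p=4m/(4m-1)$ and $q=4m$ absorbs the top-order factor into $\delta\|u\|_{m+1}^2$ and produces the remainder $C(m,\delta)\|u\|_1^{2m+2}\|u\|^{2m}$, which establishes the second inequality.
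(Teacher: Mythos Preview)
The paper does not actually prove this lemma: it is stated with the preamble ``We need the following results~\cite{Ku1}'' and then quoted verbatim from Kuksin's monograph without argument. So there is no in-paper proof to compare against; your proposal supplies what the authors simply imported.

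Your arguments for (i), (ii), (iv), (v) are correct and are exactly the classical ones. The vorticity computation for (ii) is right (and is the standard 2D enstrophy identity), and the duality-plus-(iv) derivation of (v) is clean. For (iii), your strategy---redistribute derivatives so that at most $m{+}1$ land on any factor, expand by Leibniz, and interpolate each intermediate norm between $\|u\|$, $\|u\|_1$, $\|u\|_{m+1}$---is the right one and is how Kuksin proceeds; you have also checked correctly that Young with exponents $4m/(4m-1)$ and $4m$ converts the first bound into the second. The only caveat is that you have left the exponent bookkeeping in (iii) at the level of a plan; carrying it out requires the Gagliardo--Nirenberg form of the interpolation (placing the intermediate factors in $L^4$ and then interpolating $\|\nabla^a u\|_{L^4}$ between $\|u\|_{m+1}$ and lower norms), not merely Lemma~\ref{lemma-ine1} between $H^s$ spaces. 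With that refinement the stated exponents $(4m-1)/2m$, $(m+1)/2m$, $1/2$ drop out.
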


We  need the following compact embedding result~\cite{Si}.
\begin{lemma}\label{lem4}
Assume  that $E, E_0,$ and $E_1$ are Banach space such that $E_1$ is compacted embedded into~$E_0$, the interpolation space $(E_0,E_1)_{\theta,1}\subset E$ with $\theta \in (0,1)$ and $E \subset E_0$. Suppose $p_0,p_1\in [0,\infty]$ and $T > 0$, such that
\begin{center}
$\mathcal{V}$\;\;\;\;\; is bounded set in \;\;\;\;\;$L^p(0,T; E_1)$
\end{center}
and
\begin{center}
$\partial\mathcal{V}:=\{\partial v : v\in\mathcal{V}\}$\;\;\;\;is bounded set in \;\;\;\;\;$L^{p_0}(0,T; E_0)$.
\end{center}
Here $\partial$ denotes the distribution derivative. If $1-\theta > \frac{1}{p_\theta}$ with
\begin{eqnarray*}
\frac{1}{p_\theta}=\frac{1-\theta}{p_0}+\frac{\theta}{p_1}.
\end{eqnarray*}
Then $\mathcal{V}$ is relatively compact in $C(0,T; E)$.
\end{lemma}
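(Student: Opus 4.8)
The plan is to deduce relative compactness in $C(0,T;E)$ from the Arzelà–Ascoli theorem for Banach–space valued functions, which requires two things: that $\mathcal{V}$ be uniformly equicontinuous in the $E$-norm, and that for each fixed $t$ the slice $\{v(t):v\in\mathcal{V}\}$ be relatively compact in $E$. The single analytic tool driving both is the interpolation inequality attached to the hypothesis $(E_0,E_1)_{\theta,1}\subset E$: for every $w\in E_1$ one has $\|w\|_E\le C\|w\|_{(E_0,E_1)_{\theta,1}}\le C\|w\|_{E_0}^{1-\theta}\|w\|_{E_1}^{\theta}$, the last bound being the standard estimate of the $K$-functional. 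First I would record the elementary time-increment bound coming from the derivative hypothesis: writing $v(t+h)-v(t)=\int_0^h \partial v(t+s)\,ds$ and applying Hölder in time gives $\|v(\cdot+h)-v(\cdot)\|_{L^{p_0}(0,T-h;E_0)}\le h\,\|\partial v\|_{L^{p_0}(0,T;E_0)}$, uniformly over $v\in\mathcal{V}$. Note that the standing assumption $1-\theta>1/p_\theta$ forces $p_0>1$ (since $1/p_\theta\ge(1-\theta)/p_0$), so this already yields a genuine modulus of continuity in $E_0$.

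The core step is to upgrade this to a uniform Hölder modulus in the stronger norm $E$. Applying the interpolation inequality pointwise in $t$ to $w=v(t+h)-v(t)$ and then Hölder's inequality in time with the exponents $p_0/((1-\theta)p_\theta)$ and $p_1/(\theta p_\theta)$ — which are conjugate precisely because $1/p_\theta=(1-\theta)/p_0+\theta/p_1$ — I obtain
\begin{equation*}
\|v(\cdot+h)-v(\cdot)\|_{L^{p_\theta}(0,T-h;E)}\le C\,\|v(\cdot+h)-v(\cdot)\|_{L^{p_0}(E_0)}^{1-\theta}\,\|v(\cdot+h)-v(\cdot)\|_{L^{p_1}(E_1)}^{\theta}\le C\,h^{1-\theta},
\end{equation*}
where I used the increment bound above for the $E_0$ factor and the boundedness of $\mathcal{V}$ in $L^{p_1}(0,T;E_1)$ for the $E_1$ factor. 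This is a uniform Nikolskii (Besov $B^{1-\theta}_{p_\theta,\infty}$) bound in time with values in $E$. Because $1-\theta>1/p_\theta$, the one–dimensional embedding of this Besov space into Hölder functions shows that every $v\in\mathcal{V}$ has a representative in $C^{0,\gamma}(0,T;E)$ with $\gamma=(1-\theta)-1/p_\theta>0$, the seminorm being bounded uniformly in $v$. This is exactly the required equicontinuity, and it is the step where the hypothesis $1-\theta>1/p_\theta$ is indispensable.

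It remains to prove pointwise relative compactness, and this is the main obstacle, since $\mathcal{V}$ is only bounded in $L^{p_1}(0,T;E_1)$ and hence the slices $v(t)$ need not be bounded in $E_1$. I would circumvent this by time-averaging: for $h>0$ set $v_h(t)=\frac1h\int_t^{t+h}v(s)\,ds$. Hölder's inequality in time and the $L^{p_1}(E_1)$ bound give $\|v_h(t)\|_{E_1}\le h^{-1/p_1}\|v\|_{L^{p_1}(E_1)}$, so $\{v_h(t):v\in\mathcal{V}\}$ is bounded in $E_1$; the compact embedding $E_1\hookrightarrow\hookrightarrow E_0$ together with the interpolation inequality then makes this slice relatively compact in $E$ for each fixed $t$ and $h$. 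On the other hand the uniform Hölder bound yields $\|v_h(t)-v(t)\|_E\le \sup_{0\le s\le h}\|v(t+s)-v(t)\|_E\le C\,h^{\gamma}$, uniformly in $v$ and $t$. Thus the slice $\{v(t):v\in\mathcal{V}\}$ is approximated in $E$, uniformly over $\mathcal{V}$, by the totally bounded sets $\{v_h(t)\}$ as $h\to0$, and is therefore itself totally bounded, i.e. relatively compact, in $E$. With equicontinuity and pointwise relative compactness both in hand, the Arzelà–Ascoli theorem gives that $\mathcal{V}$ is relatively compact in $C(0,T;E)$, completing the proof.
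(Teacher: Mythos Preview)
The paper does not prove this lemma at all; it is quoted (with a few typos: $L^p$ should read $L^{p_1}$, and $p_0,p_1\in[0,\infty]$ should be $[1,\infty]$) from Simon's compactness paper \cite{Si} and used as a black box. So there is no ``paper's own proof'' to compare against.

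Your argument supplies an actual proof, and its architecture is essentially Simon's: interpolate the $E_0$-increment bound coming from $\partial v\in L^{p_0}(E_0)$ against the standing $L^{p_1}(E_1)$ bound to produce a uniform Nikolskii estimate $\|v(\cdot+h)-v\|_{L^{p_\theta}(E)}\le Ch^{1-\theta}$; use the one-dimensional Besov embedding (valid exactly when $1-\theta>1/p_\theta$) to upgrade this to a uniform $C^{0,\gamma}(0,T;E)$ bound with $\gamma=(1-\theta)-1/p_\theta$; and finally recover pointwise relative compactness by approximating $v(t)$ with time-averages $v_h(t)$, which are bounded in $E_1$ and hence (via the compact embedding $E_1\hookrightarrow\hookrightarrow E_0$ and the interpolation inequality) relatively compact in $E$. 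Two cosmetic remarks: the increment bound $\|v(\cdot+h)-v\|_{L^{p_0}(E_0)}\le h\|\partial v\|_{L^{p_0}(E_0)}$ follows from Minkowski's integral inequality rather than H\"older; and near $t=T$ the forward average $h^{-1}\int_t^{t+h}v$ should be replaced by a backward one. Neither affects the substance, and the proof is correct.
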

To prove Lemma \ref{pf-lem}, we also need the following result~\cite{Li}.

\begin{lemma}\label{lem12}
For all $T>0$. Let $\mathcal{Q}$ be bounded region in $D\times [0,T]$. For all given functions $g^\varepsilon$ and $g$ in $L^p(\mathcal{Q}) (1< p<\infty)$, if
\begin{eqnarray*}\label{lem122}
|g^\varepsilon|_{L^{p}(\mathcal{Q})}\leq C \quad \text{and } \quad g^\varepsilon\rightarrow g\; \rm{in}\,\,\mathcal{Q}\,\,\rm{ almost\,\, everywhere},
\end{eqnarray*}
for some positive constant $C$, then $g^\varepsilon$ converges weakly to $g$ in $L^p(\mathcal{Q})$.
\end{lemma}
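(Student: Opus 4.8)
The plan is to prove this as a standard weak-compactness-type lemma, exploiting three structural facts hidden in the hypotheses: the domain $\mathcal{Q}$ has finite Lebesgue measure (it is a bounded region), the space $L^p(\mathcal{Q})$ is reflexive for $1<p<\infty$, and, crucially, the conjugate exponent $p'=p/(p-1)$ is finite. I would argue directly: fix an arbitrary test function $\phi\in L^{p'}(\mathcal{Q})$ and show $\int_{\mathcal{Q}}(g^\varepsilon-g)\phi\,dx\to0$, splitting $\mathcal{Q}$ into a ``good'' set where the convergence $g^\varepsilon\to g$ is uniform (furnished by Egorov's theorem, which applies precisely because $|\mathcal{Q}|<\infty$) and a ``bad'' set of small measure whose contribution is controlled by the uniform $L^p$ bound together with H\"older's inequality.

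More precisely: by Fatou's lemma the a.e.\ limit satisfies $\|g\|_{L^p(\mathcal{Q})}\le\liminf_\varepsilon\|g^\varepsilon\|_{L^p(\mathcal{Q})}\le C$, so $g^\varepsilon-g$ stays uniformly bounded in $L^p$ by $2C$. Given $\eta>0$, Egorov's theorem supplies a measurable set $A_\delta\subset\mathcal{Q}$ with $|\mathcal{Q}\setminus A_\delta|<\delta$ on which $g^\varepsilon\to g$ uniformly. Writing
\begin{eqnarray*}
\int_{\mathcal{Q}}(g^\varepsilon-g)\phi\,dx=\int_{A_\delta}(g^\varepsilon-g)\phi\,dx+\int_{\mathcal{Q}\setminus A_\delta}(g^\varepsilon-g)\phi\,dx,
\end{eqnarray*}
the first term is bounded by $\big(\sup_{A_\delta}|g^\varepsilon-g|\big)\|\phi\|_{L^1(\mathcal{Q})}$, which tends to $0$ as $\varepsilon\to0$ for each fixed $\delta$; for the second, H\"older's inequality gives
\begin{eqnarray*}
\Big|\int_{\mathcal{Q}\setminus A_\delta}(g^\varepsilon-g)\phi\,dx\Big|\le\|g^\varepsilon-g\|_{L^p(\mathcal{Q})}\,\|\phi\|_{L^{p'}(\mathcal{Q}\setminus A_\delta)}\le2C\,\|\phi\|_{L^{p'}(\mathcal{Q}\setminus A_\delta)}.
\end{eqnarray*}
Choosing $\delta$ small first and then letting $\varepsilon\to0$ yields $\big|\int_{\mathcal{Q}}(g^\varepsilon-g)\phi\,dx\big|<\eta$, and since $\phi\in L^{p'}(\mathcal{Q})$ was arbitrary this is exactly the asserted weak convergence.

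The main obstacle, and the only place the hypothesis $1<p<\infty$ is genuinely needed, is controlling the tail over the exceptional set $\mathcal{Q}\setminus A_\delta$: one must ensure $\|\phi\|_{L^{p'}(\mathcal{Q}\setminus A_\delta)}\to0$ as $\delta\to0$. This is precisely the absolute continuity of the integral of $|\phi|^{p'}$, which holds because $p'<\infty$; for $p=1$ (so $p'=\infty$) this step fails and the statement itself is false. I would therefore take care to record that it is finiteness of the conjugate exponent, not mere $L^p$-boundedness, that drives the argument. As a cross-check I would mention an equivalent route: the H\"older estimate $\int_A|g^\varepsilon|\,dx\le C|A|^{1/p'}$ shows $\{g^\varepsilon\}$ is uniformly integrable, so Vitali's convergence theorem gives $g^\varepsilon\to g$ strongly in $L^1(\mathcal{Q})$; combined with reflexivity of $L^p$ and uniqueness of subsequential weak limits (every weakly convergent subsequence must coincide with $g$ when tested against $L^\infty\subset L^{p'}$, the inclusion again using $|\mathcal{Q}|<\infty$), this forces the whole sequence to converge weakly to $g$.
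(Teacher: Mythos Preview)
Your argument is correct: the Egorov/H\"older splitting you give is the standard proof of this classical lemma, and the alternative route via uniform integrability and Vitali's theorem that you sketch as a cross-check is equally valid. There is nothing to compare against, however, since the paper does not prove this statement at all; it simply quotes the result from Lions~\cite{Li} and uses it as a black box in the proof of Lemma~\ref{pf-lem}.
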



\section{Tightness for $\{u^\varepsilon(\cdot,\cdot)\}_\varepsilon$}\label{sec:tig-woutsf}
In this section, we  consider the tightness of the solution  of two dimensional stochastic Navier--Stokes equations (\ref{p-main-equ1}).
\begin{proposition}\label{tight-theo}
 Assume that $(\mathbf{H_{1}})$--$(\mathbf{H_{3}})$,  and $\mathbb{E}\|u_0\|_{V^1}^2\leq C$ for some constant $C >0$ hold. Then
\begin{eqnarray}\label{theo-equ}
\mathbb{E}\sup_{0\leq t\leq T}\|u^\varepsilon(t)\|^2+2\mathbb{E}\int_0^T\|u^\varepsilon(s)\|_1^2ds\leq C_T(1+\mathbb{E}\|u_0\|^2),
\end{eqnarray}
and
\begin{eqnarray}\label{theo-eequ}
\mathbb{E}\sup_{0\leq t\leq T}\|u^\varepsilon(t)\|_1^2\leq C_T(1+\mathbb{E}\|u_0\|_1^2),
\end{eqnarray}
\end{proposition}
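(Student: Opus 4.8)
The plan is to derive the two a priori bounds by standard energy estimates applied to the stochastic equation (\ref{p-main-equ1}), exploiting the cancellation properties of the nonlinear term collected in Lemma \ref{lemma-b}, the uniform bound on the potential from $(\mathbf{H_2})$, and the boundedness of the noise coefficient from $(\mathbf{H_3})$, together with the Burkholder--Davis--Gundy inequality and Gronwall's lemma. The point worth stressing is that all estimates must be uniform in $\varepsilon$; this is automatic because the oscillatory potential satisfies $|q^\varepsilon(x,\omega)|\le C$ with $C$ independent of $\varepsilon$ by $(\mathbf{H_2})$, and $\sigma^\varepsilon(t,h)=\sigma(t/\varepsilon,h)$ inherits the $\varepsilon$-independent bound from $(\mathbf{H_3})$ since the supremum there is over all $t$.

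For (\ref{theo-equ}), I would apply It\^o's formula to $\|u^\varepsilon(t)\|^2$. The Stokes term yields $-2\int_0^t\|u^\varepsilon(s)\|_1^2\,ds$ after using $\|u\|_1^2\cong\|A^{1/2}u\|^2$; the nonlinear term drops out entirely because $(B(u),u)=0$ by Lemma \ref{lemma-b}$(i)$; the potential term is controlled by $2\int_0^t(q^\varepsilon u^\varepsilon,u^\varepsilon)\,ds\le 2C\int_0^t\|u^\varepsilon(s)\|^2\,ds$ via $(\mathbf{H_2})$; the It\^o correction term $\int_0^t\|\sigma^\varepsilon(s,u^\varepsilon(s))\|_{\mathcal{L}_2(H_Q,H)}^2\,ds$ is bounded by $CT$ using $(\mathbf{H_3})$; and the stochastic integral $2\int_0^t(u^\varepsilon(s),\sigma^\varepsilon(s,u^\varepsilon(s))\,dW(s))$ is handled after taking $\sup_{0\le t\le T}$ and expectation by BDG: its expected supremum is bounded by $C\,\mathbb{E}\big(\int_0^T\|u^\varepsilon\|^2\|\sigma^\varepsilon\|_{\mathcal{L}_2}^2\,ds\big)^{1/2}\le \tfrac12\mathbb{E}\sup_{0\le t\le T}\|u^\varepsilon(t)\|^2+C_T$, absorbing the first piece to the left. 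Then Gronwall in the remaining $\int_0^t\|u^\varepsilon\|^2$ term closes the estimate and delivers the claimed bound with the full $2\,\mathbb{E}\int_0^T\|u^\varepsilon\|_1^2\,ds$ on the left.

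For (\ref{theo-eequ}), I would apply It\^o's formula to $\|u^\varepsilon(t)\|_1^2\cong\|A^{1/2}u^\varepsilon(t)\|^2$, i.e.\ test the equation against $Au^\varepsilon$. The crucial cancellation is $(B(u),\Delta u)=0$, i.e.\ $(B(u),Au)=0$, from Lemma \ref{lemma-b}$(ii)$, which removes the nonlinear term again; the Stokes term gives $-2\int_0^t\|u^\varepsilon(s)\|_2^2\,ds\le 0$; the potential term gives $2\int_0^t(q^\varepsilon u^\varepsilon, Au^\varepsilon)\,ds$, which after integration by parts (moving one derivative) and using $|q^\varepsilon|\le C$ together with $\|\nabla q^\varepsilon\|$ — here one must be careful: $\nabla q^\varepsilon$ carries a factor $1/\varepsilon$, so instead I would write $(q^\varepsilon u^\varepsilon,Au^\varepsilon)=(A^{1/2}(q^\varepsilon u^\varepsilon),A^{1/2}u^\varepsilon)$ and estimate more crudely, or better, use $2(q^\varepsilon u^\varepsilon,Au^\varepsilon)\le \|u^\varepsilon\|_2^2$ \emph{not available cleanly}; the clean route is to avoid $Au^\varepsilon$ in the potential pairing by the bound $|(q^\varepsilon u^\varepsilon, A u^\varepsilon)| \le \varepsilon_0\|u^\varepsilon\|_2^2 + C_{\varepsilon_0}\|q^\varepsilon u^\varepsilon\|^2 \le \varepsilon_0\|u^\varepsilon\|_2^2 + C\|u^\varepsilon\|^2$ using only $|q^\varepsilon|\le C$ in $L^\infty$, absorbing $\varepsilon_0\|u^\varepsilon\|_2^2$ into the dissipation. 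The noise terms are treated exactly as before, now with $\partial_h\sigma^\varepsilon$ appearing in the It\^o correction $\int_0^t\|A^{1/2}\sigma^\varepsilon(s,u^\varepsilon(s))\|^2\,ds$, which is bounded by $CT$ via the bounds on $\sigma$ and $\partial_h\sigma$ in $(\mathbf{H_3})$ together with the already-established bound (\ref{theo-equ}) on $\int_0^T\|u^\varepsilon\|_1^2\,ds$; the stochastic integral is again absorbed by BDG. Gronwall then yields (\ref{theo-eequ}).

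The main obstacle is the potential term in the higher-order estimate: one cannot differentiate $q^\varepsilon$ without paying a factor $1/\varepsilon$, so the argument must keep $q^\varepsilon$ in $L^\infty$ and pair it against $Au^\varepsilon$ using a Cauchy--Schwarz split that sacrifices a small fraction of the $\|u^\varepsilon\|_2^2$ dissipation; one should check this is genuinely available, which it is since the Stokes term furnishes a full $-2\int\|u^\varepsilon\|_2^2$. A secondary technical point is justifying the application of It\^o's formula to $\|u^\varepsilon\|_1^2$ rigorously (this is standard via Galerkin approximation, and the well-posedness reference already cited covers the needed regularity given $u_0\in V^2$), and making sure the BDG absorption is performed on the $\sup$ before invoking Gronwall, not after.
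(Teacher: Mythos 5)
Your proposal is correct and follows essentially the same route as the paper: It\^o's formula applied to $\|u^\varepsilon\|^2$ and $\|u^\varepsilon\|_1^2$, the cancellations $(B(u),u)=0$ and $(B(u),\Delta u)=0$, a Young/Cauchy--Schwarz split of the potential term against the dissipation using only $|q^\varepsilon|\le C$ in $L^\infty$, Burkholder--Davis--Gundy with absorption of the supremum, and Gronwall. The point you flag about not differentiating $q^\varepsilon$ is handled in the paper exactly as you suggest, by sacrificing half of the $\|\Delta u^\varepsilon\|^2$ dissipation.
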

\begin{proof}
By It\^{o}'s formula,
\begin{eqnarray*}\label{theo-equ1}
&&\frac{1}{2}\frac{d}{dt}\|u^\varepsilon(t)\|^2=\langle u^\varepsilon(t),-(Au^\varepsilon(t)+B(u^\varepsilon(t)))\rangle\\
&&\quad\quad\quad\quad\quad\quad\quad+\langle u^\varepsilon(t),q^\varepsilon(x,\omega)u^\varepsilon(t)\rangle+\langle u^\varepsilon(t),\sigma^\varepsilon(t,u^\varepsilon(t))dW(t)\rangle\\&&\quad\quad\quad\quad\quad\quad\quad+\frac{1}{2}\|\sigma^\varepsilon(t,u^\varepsilon(t))\|_{\mathcal{L}_2^Q}^2,
\end{eqnarray*}
and
\begin{eqnarray*}\label{theo-eequ1}
&&\frac{1}{2}\frac{d}{dt}\|u^\varepsilon(t)\|_1^2=\langle\Delta u^\varepsilon(t),(Au^\varepsilon(t)+B(u^\varepsilon(t)))\rangle\\
&&\quad\quad\quad\quad\quad\quad\quad-\langle \Delta u^\varepsilon(t),q^\varepsilon(x,\omega)u^\varepsilon(t)\rangle-\langle \Delta u^\varepsilon(t),\sigma^\varepsilon(t,u^\varepsilon(t))dW(t)\rangle\\&&\quad\quad\quad\quad\quad\quad\quad+\frac{1}{2}\|\partial_x\sigma^\varepsilon(t,u^\varepsilon(t))\nabla u^\varepsilon(t)\|_{\mathcal{L}_2^Q}^2.
\end{eqnarray*}
Furthermore, the Young inequality and $(\mathbf{H_{2}})$ yield
\begin{eqnarray}\label{theo-equ2}
&&\frac{1}{2}\frac{d}{dt}\|u^\varepsilon(t)\|^2+\|u^\varepsilon(t)\|_1^2\leq C \|u^\varepsilon(t)\|^2+\langle u^\varepsilon(t),\sigma^\varepsilon(t,u^\varepsilon(t))dW(t)\rangle\nonumber\\&&\quad\quad\quad\quad\quad\quad\quad\quad\quad\quad\quad\quad+\frac{1}{2}\|\sigma^\varepsilon(t,u^\varepsilon(t))\|_{\mathcal{L}_2^Q}^2,
\end{eqnarray}
and
\begin{eqnarray}\label{theo-eequ2}
&&\frac{1}{2}\frac{d}{dt}\|u^\varepsilon(t)\|_1^2\leq -\frac{1}{2}\|\Delta u^\varepsilon(t)\|^2+\frac{C}{2} \|u^\varepsilon(t)\|^2\nonumber\\&&\quad\quad\quad\quad\quad\quad\quad\quad\quad\quad\quad\quad+\langle \Delta u^\varepsilon(t),\sigma^\varepsilon(t,u^\varepsilon(t))dW(t)\rangle\nonumber\\&&\quad\quad\quad\quad\quad\quad\quad\quad\quad\quad\quad\quad+\frac{1}{2}\|\partial_x\sigma^\varepsilon(t,u^\varepsilon(t))\nabla u^\varepsilon(t)\|_{\mathcal{L}_2^Q}^2.
\end{eqnarray}

Integrating from $0$ to $t$ of equations (\ref{theo-equ2}) and (\ref{theo-eequ2}), respectively, yields
\begin{eqnarray}\label{theo-equ3}
&&\|u^\varepsilon(t)\|^2+2\int_0^t \|u^\varepsilon(s)\|_1^2ds\leq\|u_0\|^2+C\int_0^t\|u^\varepsilon(s)\|^2ds\nonumber \\&&\quad\quad\quad\quad\quad\quad\quad\quad\quad\quad\quad\quad\quad+2\int_0^t\langle u^\varepsilon(s),\sigma^\varepsilon(s,u^\varepsilon(s))\rangle dW(s)\nonumber\\&&\quad\quad\quad\quad\quad\quad\quad\quad\quad\quad\quad\quad\quad+\int_0^t\|\sigma^\varepsilon(s,u^\varepsilon(s))\|_{\mathcal{L}_2^Q}^2ds,
\end{eqnarray}
and
\begin{eqnarray}\label{theo-eequ3}
&&\|u^\varepsilon(t)\|_1^2+\int_0^t \|\Delta u^\varepsilon(s)\|^2ds\leq\|u_0\|_1^2+C\int_0^t\|u^\varepsilon(s)\|_1^2ds\nonumber \\&&\quad\quad\quad\quad\quad\quad\quad\quad\quad\quad\quad\quad\quad+2\int_0^t\langle \Delta u^\varepsilon(s),\sigma^\varepsilon(s,u^\varepsilon(s))\rangle dW(s)\nonumber\\&&\quad\quad\quad\quad\quad\quad\quad\quad\quad\quad\quad\quad\quad+\int_0^t\|\partial_x\sigma^\varepsilon(s,u^\varepsilon(s))\nabla u^\varepsilon(t)\|_{\mathcal{L}_2^Q}^2ds.
\end{eqnarray}
Furthermore,
\begin{eqnarray}\label{theo-equ4}
&&\sup_{0\leq t\leq T}\|u^\varepsilon(t)\|^2+2\int_0^T \|u^\varepsilon(s)\|_1^2ds\nonumber\\&&\quad\quad\quad\quad\leq\|u_0\|^2+C\int_0^T\sup_{0\leq\tau\leq s}\|u^\varepsilon(\tau)\|^2ds\nonumber\\&&\quad\quad\quad\quad\quad+2\sup_{0\leq t \leq T}\int_0^t\langle u^\varepsilon(s),\sigma^\varepsilon(s,u^\varepsilon(s))\rangle dW(s)\nonumber\\&&\quad\quad\quad\quad\quad+\int_0^T\|\sigma^\varepsilon(s,u^\varepsilon(s))\|_{\mathcal{L}_2^Q}^2ds,
\end{eqnarray}
and
\begin{eqnarray}\label{theo-eequ4}
&&\sup_{0\leq t\leq T}\|u^\varepsilon(t)\|_1^2+\int_0^T \| \Delta u^\varepsilon(s)\|^2ds\nonumber\\&&\quad\quad\quad\quad\leq\|u_0\|_1^2+C\int_0^T\sup_{0\leq\tau\leq s}\|u^\varepsilon(\tau)\|_1^2ds\nonumber\\&&\quad\quad\quad\quad\quad+2\sup_{0\leq t \leq T}\int_0^t\langle \Delta u^\varepsilon(s),\sigma^\varepsilon(s,u^\varepsilon(s))\rangle dW(s)\nonumber\\&&\quad\quad\quad\quad\quad+\int_0^T\|\partial_x\sigma^\varepsilon(s,u^\varepsilon(s))\nabla u^\varepsilon(s) \|_{\mathcal{L}_2^Q}^2ds.
\end{eqnarray}
By  Burkholder--Davis--Gundy inequality, Young inequality, H\"{o}lder inequality and $(\mathbf{H_{3}})$,
\begin{eqnarray}\label{theo-equ41}
&&\mathbb{E}\sup_{0\leq t \leq T}\int_0^t\langle u^\varepsilon(s),\sigma^\varepsilon(s,u^\varepsilon(s))\rangle dW(s)\nonumber\\&&\quad\quad\quad\quad\quad\quad\leq
\mathbb{E}\left(\int_0^T\langle u^\varepsilon(s),\sigma^\varepsilon(s,u^\varepsilon(s))\rangle^2ds\right)^{\frac{1}{2}}\nonumber\\&&\quad\quad\quad\quad\quad\quad
\leq \mathbb{E}\left(\int_0^T \|u^\varepsilon(s)\|^2 \|\sigma^\varepsilon(s,u^\varepsilon(s))\|_{\mathcal{L}_2^Q}^2ds\right)^{\frac{1}{2}}\nonumber\\&&\quad\quad\quad\quad\quad\quad\leq \mathbb{E} \left(\sup_{0\leq s\leq T}\|\sigma^\varepsilon(s,u^\varepsilon(s))\|_{\mathcal{L}_2^Q}^2\int_0^T \|u^\varepsilon(s)\|^2 ds\right)^{\frac{1}{2}}\nonumber\\&&\quad\quad\quad\quad\quad\quad\leq
\frac{1}{2}\left(C+\mathbb{E}\int_0^T \sup_{0\leq\tau \leq s}\|u^\varepsilon(\tau)\|^2 ds\right),
\end{eqnarray}
and
\begin{eqnarray}\label{theo-eequ41}
&&\mathbb{E}\sup_{0\leq t \leq T}\int_0^t\langle \Delta u^\varepsilon(s),\sigma^\varepsilon(s,u^\varepsilon(s))\rangle dW(s)\nonumber\\&&\quad\quad\quad\quad\quad\quad\leq
\mathbb{E}\left(\int_0^T\langle \Delta u^\varepsilon(s),\sigma^\varepsilon(s,u^\varepsilon(s))\rangle^2ds\right)^{\frac{1}{2}}\nonumber\\&&\quad\quad\quad\quad\quad\quad
\leq \mathbb{E}\left(\int_0^T \|\Delta u^\varepsilon(s)\|^2 \|\sigma^\varepsilon(s,u^\varepsilon(s))\|_{\mathcal{L}_2^Q}^2ds\right)^{\frac{1}{2}}\nonumber\\&&\quad\quad\quad\quad\quad\quad\leq \mathbb{E} \left(\sup_{0\leq s\leq T}\|\sigma^\varepsilon(s,u^\varepsilon(s))\|_{\mathcal{L}_2^Q}^2\int_0^T \|\Delta u^\varepsilon(s)\|^2 ds\right)^{\frac{1}{2}}\nonumber\\&&\quad\quad\quad\quad\quad\quad\leq
\frac{1}{2}\left(C+\mathbb{E}\int_0^T \|\Delta u^\varepsilon(s)\|^2 ds\right).
\end{eqnarray}
Substituting (\ref{theo-equ41}), (\ref{theo-eequ41}) into (\ref{theo-equ4}), (\ref{theo-eequ4}), respectively,  yields
\begin{eqnarray*}\label{theo-equ42}
\mathbb{E}\sup_{0\leq t\leq T}\|u^\varepsilon(t)\|^2+2\mathbb{E}\int_0^T \|u^\varepsilon(s)\|_1^2ds\leq C_T(\mathbb{E}\|u_0\|^2+1)+C\mathbb{E}\int_0^T \sup_{0\leq\tau \leq s}\|u^\varepsilon(\tau)\|^2 ds,
\end{eqnarray*}
and
\begin{eqnarray*}\label{theo-eequ42}
\mathbb{E}\sup_{0\leq t\leq T}\|u^\varepsilon(t)\|_1^2\leq C_T(\mathbb{E}\|u_0\|_1^2+1)+C\mathbb{E}\int_0^T \sup_{0\leq\tau \leq s}\|u^\varepsilon(\tau)\|_1^2 ds.
\end{eqnarray*}
By  Gronwall's inequality, we obtain (\ref{theo-equ}) and (\ref{theo-eequ}).
\end{proof}
\begin{proposition}\label{tight-pro}
 Assume that $(\mathbf{H_{1}})$--$(\mathbf{H_{3}})$,  and $\mathbb{E}\|u_0\|_{V^1}^{2p}\leq C$ for some constant $C >0$ and any $p>0$ hold. Then
\begin{eqnarray}\label{theo-pro-equ}
\mathbb{E}\sup_{0\leq t\leq T}\|u^\varepsilon(t)\|^{2p}\leq C_T(\mathbb{E}\|u_0\|^{2p}+C)+1,
\end{eqnarray}
and
\begin{eqnarray}\label{theo-pro-eequ}
\mathbb{E}\sup_{0\leq t\leq T}\|u^\varepsilon(t)\|_1^{2p}\leq C_T(\mathbb{E}\|u_0\|_1^{2p}+C)+1.
\end{eqnarray}

\end{proposition}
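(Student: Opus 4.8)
The plan is to upgrade the $L^2$-in-probability estimates of Proposition~\ref{tight-theo} to their $2p$-th moment versions by essentially the same It\^o-functional argument, but keeping track of higher powers of the energy. Concretely, I would apply It\^o's formula not to $\|u^\varepsilon(t)\|^2$ and $\|u^\varepsilon(t)\|_1^2$ directly, but to the functions $\varphi(t)=(1+\|u^\varepsilon(t)\|^2)^{p}$ and $\psi(t)=(1+\|u^\varepsilon(t)\|_1^2)^{p}$. The shift by $1$ keeps the functional strictly positive so that its derivatives are well behaved; the parabolic term $-\|u^\varepsilon\|_1^2$ (resp. $-\tfrac12\|\Delta u^\varepsilon\|^2$) coming from the Stokes operator survives with a favorable sign (by Lemma~\ref{lemma-b}(ii) the trilinear term drops out), the potential term is controlled by $(\mathbf{H_2})$, and the It\^o correction terms $\|\sigma^\varepsilon\|_{\mathcal L_2^Q}^2$ and $\|\partial_x\sigma^\varepsilon\nabla u^\varepsilon\|_{\mathcal L_2^Q}^2$ are bounded by $(\mathbf{H_3})$ (the second one linearly in $\|u^\varepsilon\|_1^2$, hence controlled by the $\Delta u^\varepsilon$ term after interpolation, exactly as in the $p=1$ case). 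After integrating in time and taking $\sup_{0\le t\le T}$, the drift terms produce $C\int_0^T\sup_{0\le\tau\le s}\varphi(\tau)\,ds$ (resp. with $\psi$), and one closes with Gronwall.

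The one genuinely new ingredient compared with Proposition~\ref{tight-theo} is the treatment of the stochastic integral, which after applying It\^o's formula to $(1+\|u^\varepsilon\|^2)^p$ has the form $2p\int_0^t(1+\|u^\varepsilon(s)\|^2)^{p-1}\langle u^\varepsilon(s),\sigma^\varepsilon(s,u^\varepsilon(s))\,dW(s)\rangle$. I would estimate $\mathbb E\sup_{0\le t\le T}$ of this by the Burkholder--Davis--Gundy inequality, bounding the quadratic variation by
\begin{eqnarray*}
C_p\,\mathbb E\left(\int_0^T (1+\|u^\varepsilon(s)\|^2)^{2p-1}\|\sigma^\varepsilon(s,u^\varepsilon(s))\|_{\mathcal L_2^Q}^2\,ds\right)^{1/2}
\le C_p\,\mathbb E\left(\sup_{0\le s\le T}(1+\|u^\varepsilon(s)\|^2)^p \int_0^T (1+\|u^\varepsilon(s)\|^2)^{p-1}\,ds\right)^{1/2},
\end{eqnarray*}
and then splitting the product with Young's inequality $ab\le \tfrac14 a^2 + b^2$ so that a small multiple of $\mathbb E\sup_{0\le t\le T}(1+\|u^\varepsilon(t)\|^2)^p$ is absorbed into the left-hand side, leaving a term $C_p\,\mathbb E\int_0^T(1+\|u^\varepsilon(s)\|^2)^{p-1}\,ds$. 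For $p\ge 1$ this last term is controlled by $1+\mathbb E\int_0^T\sup_{0\le\tau\le s}(1+\|u^\varepsilon(\tau)\|^2)^p\,ds$ (using $x^{p-1}\le 1+x^p$), which feeds into Gronwall; for $0<p<1$ it is even bounded by $C_pT$ directly. The analogous computation for $\psi(t)=(1+\|u^\varepsilon(t)\|_1^2)^p$ uses the $\langle\Delta u^\varepsilon,\sigma^\varepsilon dW\rangle$ martingale, where the $\Delta u^\varepsilon$ factor is absorbed against the $-\tfrac12\int_0^t\|\Delta u^\varepsilon\|^2$ term rather than against a $\sup$, exactly paralleling (\ref{theo-eequ41}).

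A technical point to be careful about: to apply It\^o's formula rigorously to $(1+\|u^\varepsilon\|_1^2)^p$ one needs enough regularity, which here is guaranteed since $u_0\in V^2$ and (\ref{theo-eequ}) already provides the $V^1$-bound; alternatively one works with the Galerkin approximations, derives the estimates uniformly in the approximation parameter, and passes to the limit. I would also note that the estimate (\ref{theo-pro-eequ}) should be proved after (\ref{theo-pro-equ}), since the inhomogeneous term $C\|u^\varepsilon\|^2$ appearing in (\ref{theo-eequ2}) must be controlled, and raising that to the $p$-th power requires the already-established bound on $\mathbb E\sup\|u^\varepsilon\|^{2p}$.

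I do not expect any serious obstacle here: the nonlinearity is handled for free by Lemma~\ref{lemma-b}(ii) just as in the $p=1$ case, and the only place requiring mild care is the BDG/Young bookkeeping to make sure the exponent $2p-1$ in the quadratic variation splits correctly into $p$ (absorbed) plus $p-1$ (Gronwall-able). The assumption $\mathbb E\|u_0\|_{V^1}^{2p}\le C$ is exactly what is needed to start the induction-free direct argument, and the constants $C_T$, $C_p$ depend only on $T$, $p$, the bounds in $(\mathbf{H_2})$--$(\mathbf{H_3})$, and are independent of $\varepsilon$ because the coefficients $q^\varepsilon$ and $\sigma^\varepsilon$ satisfy those bounds uniformly in $\varepsilon$.
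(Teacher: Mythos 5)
Your proposal is correct, but it takes a different route from the paper. You apply It\^o's formula directly to the $p$-th power functionals $(1+\|u^\varepsilon\|^2)^p$ and $(1+\|u^\varepsilon\|_1^2)^p$, so the exponent $p$ enters through the chain rule, and you then control the weighted martingale via Burkholder--Davis--Gundy, split the exponent $2p-1=p+(p-1)$, absorb the $\sup$ part by Young's inequality, and close with Gronwall. The paper instead never applies It\^o beyond the quadratic functionals: it reuses the already-integrated identities (\ref{theo-equ3}) and (\ref{theo-eequ3}) from Proposition \ref{tight-theo}, raises both sides to the power $p$, and invokes the Burkholder--Davis--Gundy inequality for $p$-th moments of the (unweighted) stochastic integrals, e.g. $\mathbb{E}\sup_t|\int_0^t\langle u^\varepsilon,\sigma^\varepsilon dW\rangle|^p\le C\,\mathbb{E}(\int_0^T\langle u^\varepsilon,\sigma^\varepsilon\rangle^2ds)^{p/2}$, followed by H\"older, Young and Gronwall; the case $0<p\le 1$ is then reduced to the case $p=2$ via $\|u\|^{2p}\le\frac{p}{2}\|u\|^4+1$. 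The paper's route avoids the (mild) technical issue of justifying It\^o's formula for a non-quadratic functional in infinite dimensions, which you correctly flag and would handle by Galerkin approximation; your route treats all $p>0$ within one framework (the $(1+x)^{p-1}\le(1+x)^p$ bound for $p\ge1$, and direct boundedness of $(1+x)^{p-1}$ for $p<1$) rather than by reduction, and the shift by $1$ is a clean way to keep the functional smooth. Your remark that the $H^1$ estimate should be proved after the $L^2$ one is not actually needed in the paper's ordering, since the inhomogeneous term in (\ref{theo-eequ3}) is already expressed through $\|u^\varepsilon\|_1^2$, but it does no harm. Both arguments yield the stated bounds with constants independent of $\varepsilon$.
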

\begin{proof}
We first prove that for any $p>1$
\begin{eqnarray}\label{theo-pro-equ1}
\mathbb{E}\sup_{0\leq t\leq T}\|u^\varepsilon(t)\|^{2p}\leq C_T(\mathbb{E}\|u_0\|^{2p}+C),
\end{eqnarray}
and
\begin{eqnarray}\label{theo-pro-eequ1}
\mathbb{E}\sup_{0\leq t\leq T}\|u^\varepsilon(t)\|_1^{2p}\leq C_T(\mathbb{E}\|u_0\|_1^{2p}+C).
\end{eqnarray}
From (\ref{theo-equ3}) and (\ref{theo-eequ3}), we deduce
\begin{eqnarray}\label{theo-pro-equ2}
&&\mathbb{E}\sup_{0\leq t\leq T}\|u^\varepsilon(t)\|^{2p}\nonumber\\&&\quad\quad\leq C_{T}(\mathbb{E}\|u_0\|^{2p}+\mathbb{E}\sup_{0\leq t\leq T}\left(\int_0^t\|u^\varepsilon(s)\|^2ds\right)^p\nonumber\\&&\quad\quad\quad\quad+\mathbb{E}\sup_{0\leq t \leq T} \left| \int_0^t\langle u^\varepsilon(s),\sigma^\varepsilon(s,u^\varepsilon(s))\rangle dW(s)\right|^p+1),
\end{eqnarray}
and
\begin{eqnarray}\label{theo-pro-eequ2}
&&\mathbb{E}\sup_{0\leq t\leq T}\|u^\varepsilon(t)\|_1^{2p}\nonumber\\&&\quad\quad\leq C_{T}(\mathbb{E}\|u_0\|_1^{2p}+\mathbb{E}\sup_{0\leq t\leq T}\left(\int_0^t\|u^\varepsilon(s)\|_1^2ds\right)^p\nonumber\\&&\quad\quad\quad\quad+\mathbb{E}\sup_{0\leq t \leq T} \left| \int_0^t\langle \Delta u^\varepsilon(s),\sigma^\varepsilon(s,u^\varepsilon(s))\rangle dW(s)\right|^p+1).
\end{eqnarray}

The  Burkholder--Davis--Gundy inequality, Young inequality, H\"{o}lder inequality and $(\mathbf{H_{3}})$ yield
\begin{eqnarray*}\label{theo-pro-equ3}
&&\mathbb{E}\sup_{0\leq t \leq T} \left| \int_0^t\langle u^\varepsilon(s),\sigma^\varepsilon(s,u^\varepsilon(s))\rangle dW(s)\right|^p\\&&\quad\quad\quad\quad
\leq \mathbb{E}\left(\int_0^T\langle u^\varepsilon(s),\sigma^\varepsilon(s,u^\varepsilon(s))\rangle^2ds \right)^{\frac{p}{2}}
\\&&\quad\quad\quad\quad\leq \mathbb{E}\left(\int_0^T \|u^\varepsilon(s)\|^2\|\sigma^\varepsilon(s,u^\varepsilon(s))\|_{\mathcal{L}_2^Q}^2 ds\right )^{\frac{p}{2}}\\&&\quad\quad\quad\quad
\leq\mathbb{E}\left(\sup_{0\leq s\leq T}\|\sigma^\varepsilon(s,u^\varepsilon(s))\|_{\mathcal{L}_2^Q}^2\int_0^T \|u^\varepsilon(s)\|^2 ds\right )^{\frac{p}{2}}\\&&\quad\quad\quad\quad\leq C \mathbb{E} \left(\int_0^T \|u^\varepsilon(s)\|^2 ds\right)^{\frac{p}{2}},
\end{eqnarray*}
and
\begin{eqnarray*}\label{theo-pro-eequ3}
&&\mathbb{E}\sup_{0\leq t \leq T} \left| \int_0^t\langle \Delta u^\varepsilon(s),\sigma^\varepsilon(s,u^\varepsilon(s))\rangle dW(s)\right|^p\\
&&\quad\quad\quad\quad
\leq \mathbb{E}\left(\int_0^T\langle\Delta u^\varepsilon(s),\sigma^\varepsilon(s,u^\varepsilon(s))\rangle^2ds \right)^{\frac{p}{2}}
\\&&\quad\quad\quad\quad\leq \mathbb{E}\left(\int_0^T \|\nabla u^\varepsilon(s)\|^2\|\partial_x\sigma^\varepsilon(s,u^\varepsilon(s)\nabla u^\varepsilon(s)\|_{\mathcal{L}_2^Q}^2 ds\right )^{\frac{p}{2}}\\&&\quad\quad\quad\quad
\leq\mathbb{E}\left(\sup_{0\leq s\leq T}\|\partial_x\sigma^\varepsilon(s,u^\varepsilon(s)\nabla u^\varepsilon(s)\|_{\mathcal{L}_2^Q}^2\int_0^T \|u^\varepsilon(s)\|_1^2 ds\right )^{\frac{p}{2}}\\&&\quad\quad\quad\quad\leq C \mathbb{E} \left(\int_0^T \|u^\varepsilon(s)\|_1^2 ds\right)^{\frac{p}{2}}.
\end{eqnarray*}
Utilizing Young inequality and H\"{o}lder inequality  yields
\begin{eqnarray*}\label{theo-pro-equ4}
&&\left(\int_0^T \|u^\varepsilon(s)\|^2 ds\right)^{\frac{p}{2}}\\&&\quad\quad\leq \frac{1}{2}+\frac{1}{2}\left(\int_0^T \|u^\varepsilon(s)\|^2 ds\right)^{p}\\&&\quad\quad\leq\frac{1}{2}+\frac{C_T}{2}\int_0^T \|u^\varepsilon(s)\|^{2p} ds,
\end{eqnarray*}
and
\begin{eqnarray*}\label{theo-pro-eequ4}
&&\left(\int_0^T \|u^\varepsilon(s)\|_1^2 ds\right)^{\frac{p}{2}}\\&&\quad\quad\leq \frac{1}{2}+\frac{1}{2}\left(\int_0^T \|u^\varepsilon(s)\|_1^2 ds\right)^{p}\\&&\quad\quad\leq\frac{1}{2}+\frac{C_T}{2}\int_0^T \|u^\varepsilon(s)\|_1^{2p} ds.
\end{eqnarray*}
Furthermore,
\begin{eqnarray}\label{theo-pro-equ5}
\mathbb{E}\sup_{0\leq t \leq T} \left| \int_0^t\langle u^\varepsilon(s),\sigma^\varepsilon(s,u^\varepsilon(s))\rangle dW(s)\right|^p \leq
\frac{C}{2}+\frac{C_T}{2}\mathbb{E}\int_0^T \|u^\varepsilon(s)\|^{2p}ds,
\end{eqnarray}
and
\begin{eqnarray}\label{theo-pro-eequ5}
\mathbb{E}\sup_{0\leq t \leq T} \left| \int_0^t\langle \Delta u^\varepsilon(s),\sigma^\varepsilon(s,u^\varepsilon(s))\rangle dW(s)\right|^p \leq
\frac{C}{2}+\frac{C_T}{2}\mathbb{E}\int_0^T \|u^\varepsilon(s)\|_1^{2p}ds,
\end{eqnarray}
By H\"{o}lder inequality, we have
\begin{eqnarray}\label{theo-pro-equ6}
\mathbb{E}\sup_{0\leq t\leq T}\left(\int_0^t\|u^\varepsilon(s)\|^2ds\right)^p \leq C_T \mathbb{E}\int_0^T \|u^\varepsilon(s)\|^{2p}ds.
\end{eqnarray}
and
\begin{eqnarray}\label{theo-pro-eequ6}
\mathbb{E}\sup_{0\leq t\leq T}\left(\int_0^t\|u^\varepsilon(s)\|_1^2ds\right)^p \leq C_T \mathbb{E}\int_0^T \|u^\varepsilon(s)\|_1^{2p}ds.
\end{eqnarray}
Combining (\ref{theo-pro-equ5}), (\ref{theo-pro-equ6}) with (\ref{theo-pro-equ2}) yields
\begin{eqnarray*}\label{theo-pro-equ7}
\mathbb{E}\sup_{0\leq t\leq T}\|u^\varepsilon(t)\|^{2p}\leq C_{T}(\mathbb{E}\|u_0(x)\|^{2p}+C)+C_T \mathbb{E}\int_0^T \sup_{0\leq \tau\leq s}\|u^\varepsilon(\tau)\|^{2p}ds.
\end{eqnarray*}
Substituting (\ref{theo-pro-eequ5}), (\ref{theo-pro-eequ6}) into (\ref{theo-pro-eequ2}) yields
\begin{eqnarray*}\label{theo-pro-eequ7}
\mathbb{E}\sup_{0\leq t\leq T}\|u^\varepsilon(t)\|_1^{2p}\leq C_{T}(\mathbb{E}\|u_0(x)\|_1^{2p}+C)+C_T \mathbb{E}\int_0^T \sup_{0\leq \tau\leq s}\|u^\varepsilon(\tau)\|_1^{2p}ds.
\end{eqnarray*}
The Gronwall's inequality yields (\ref{theo-pro-equ1}) and (\ref{theo-pro-eequ1}).

For the case $0< p\leq 1$, we obtain
\begin{eqnarray*}\label{theo-pro-equ8}
\|u^\varepsilon(s)\|^{2p}\leq \frac{1}{p'}\|u^\varepsilon(s)\|^{2pp'}+ \frac{1}{q'},
\end{eqnarray*}
and
\begin{eqnarray*}\label{theo-pro-eequ8}
\|u^\varepsilon(s)\|_1^{2p}\leq \frac{1}{p'}\|u^\varepsilon(s)\|_1^{2pp'}+ \frac{1}{q'}.
\end{eqnarray*}
Here we have used Young inequality
\begin{eqnarray*}\label{theo-pro-equ9}
ab\leq \frac{1}{p'} a^{p'}+\frac{1}{q'} b^{q'},
\end{eqnarray*}
where $p',q'>1$\;,$\frac{1}{p'}+\frac{1}{q'}=1$.
Selecting $p'=2p^{-1}$ yields
\begin{eqnarray*}\label{theo-pro-equ10}
\|u^\varepsilon(s)\|^{2p}\leq \frac{p}{2}\|u^\varepsilon(s)\|^4+1,
\end{eqnarray*}
and
\begin{eqnarray*}\label{theo-pro-eequ10}
\|u^\varepsilon(s)\|_1^{2p}\leq \frac{p}{2}\|u^\varepsilon(s)\|_1^4+1.
\end{eqnarray*}
It follows from (\ref{theo-pro-equ1}) and (\ref{theo-pro-eequ1}) that
\begin{eqnarray*}\label{theo-pro-equ11}
\mathbb{E}\sup_{0\leq s\leq T}\|u^\varepsilon(s)\|^{4}\leq C_T(\mathbb{E}\|u_0\|^{4}+C),
\end{eqnarray*}
and
\begin{eqnarray*}\label{theo-pro-eequ11}
\mathbb{E}\sup_{0\leq s\leq T}\|u^\varepsilon(s)\|_1^{4}\leq C_T(\mathbb{E}\|u_0\|_1^{4}+C).
\end{eqnarray*}
The above estimations yield (\ref{theo-pro-equ}) and (\ref{theo-pro-eequ}).
\end{proof}
\begin{proposition}\label{tight-2theo}
 Assume that $(\mathbf{H_{1}})$--$(\mathbf{H_{2}})$,  and $\mathbb{E}\|u_0\|_{V^2}^2\leq C$ for some constant $C >0$ hold. Then, for $0< \delta << \frac{1}{2}$
\begin{eqnarray}\label{theo-2equ}
\mathbb{E}\sup_{0\leq t\leq T}\|u^\varepsilon(t)\|_2^2+2(1-2\delta)\mathbb{E}\int_0^T\|u^\varepsilon(s)\|_3^2ds\leq C_{T}(\mathbb{E}\|u_0\|_2^2+1).
\end{eqnarray}
\end{proposition}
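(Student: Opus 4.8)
The plan is to carry out the energy method of Propositions \ref{tight-theo} and \ref{tight-pro} one level of regularity higher, applying It\^{o}'s formula to $\|u^\varepsilon(t)\|_2^2$, which in the periodic case equals $\|Au^\varepsilon(t)\|^2$ up to equivalence of norms. Using the equation $du^\varepsilon=[-(Au^\varepsilon+B(u^\varepsilon))+q^\varepsilon u^\varepsilon]\,dt+\sigma^\varepsilon(t,u^\varepsilon)\,dW$, this produces the dissipative term $-2\|u^\varepsilon\|_3^2$ coming from $\langle A^2u^\varepsilon,-Au^\varepsilon\rangle$, the bilinear term $-2(A^2u^\varepsilon,B(u^\varepsilon))$, the potential term $2\langle A^2u^\varepsilon,q^\varepsilon u^\varepsilon\rangle$, the It\^{o} correction $\|A\sigma^\varepsilon(t,u^\varepsilon)\|_{\mathcal{L}_2^Q}^2$, and the local martingale differential $2\langle A^2u^\varepsilon,\sigma^\varepsilon(t,u^\varepsilon)\,dW\rangle$. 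Integrating in time and organizing the deterministic part so that the surviving dissipation retains the coefficient $2(1-2\delta)\int_0^t\|u^\varepsilon\|_3^2$ is the goal of the first part of the argument.

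For the bilinear term I would invoke Lemma \ref{lemma-b}(iii) with $m=2$ in the absorbing form $|(A^2u^\varepsilon,B(u^\varepsilon))|\le\delta\|u^\varepsilon\|_3^2+C(2,\delta)\|u^\varepsilon\|_1^6\|u^\varepsilon\|^4$; the $\delta\|u^\varepsilon\|_3^2$ is absorbed into the dissipation, while the remainder is dominated by $C\|u^\varepsilon\|_1^{10}$, whose time integral is bounded in expectation by Proposition \ref{tight-pro} with $p=5$. The oscillating potential term is estimated by means of $(\mathbf{H_2})$: after an integration by parts and Young's inequality it yields a further small multiple of $\|u^\varepsilon\|_3^2$ (again absorbed, which together with the bilinear contribution is why the factor $2(1-2\delta)$ appears) plus terms of order $\|u^\varepsilon\|_1^2$ and $\|u^\varepsilon\|_2^2$, the former controlled by Proposition \ref{tight-theo}. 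The It\^{o} correction term is handled with $(\mathbf{H_3})$: since the spatial Laplacian of $\sigma^\varepsilon(t,u^\varepsilon)$ is governed by $\partial_h\sigma$, $\partial_h^2\sigma$ together with $\Delta u^\varepsilon$ and $\nabla u^\varepsilon$, the Ladyzhenskaya inequality (Lemma \ref{lemma-ine}) gives $\|A\sigma^\varepsilon(t,u^\varepsilon)\|_{\mathcal{L}_2^Q}^2\le C(1+\|u^\varepsilon\|_1^2)\|u^\varepsilon\|_2^2$, which feeds the Gronwall term.

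The remaining steps are identical in spirit to the treatment of the stochastic integrals in Propositions \ref{tight-theo}--\ref{tight-pro}: take the supremum over $t\in[0,T]$, then expectations, and apply the Burkholder--Davis--Gundy, H\"{o}lder and Young inequalities to $\mathbb{E}\sup_t|\int_0^t\langle A^2u^\varepsilon,\sigma^\varepsilon\,dW\rangle|$, absorbing a fraction into $\mathbb{E}\sup_{[0,T]}\|u^\varepsilon\|_2^2$ and leaving a term of the form $C+C\,\mathbb{E}\int_0^T\sup_{[0,\tau]}\|u^\varepsilon(\tau)\|_2^2\,d\tau$. Collecting all contributions yields
\begin{eqnarray*}
\mathbb{E}\sup_{0\le t\le T}\|u^\varepsilon(t)\|_2^2+2(1-2\delta)\mathbb{E}\int_0^T\|u^\varepsilon(s)\|_3^2\,ds\le C_T(\mathbb{E}\|u_0\|_2^2+1)+C\,\mathbb{E}\int_0^T\sup_{0\le\tau\le s}\|u^\varepsilon(\tau)\|_2^2\,ds,
\end{eqnarray*}
and Gronwall's inequality closes the estimate.

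The main obstacle is the bilinear term $(A^2u^\varepsilon,B(u^\varepsilon))$: one must spend only an arbitrarily small fraction of the top-order norm $\|u^\varepsilon\|_3^2$ through Lemma \ref{lemma-b}(iii), and the price is the factor $\|u^\varepsilon\|_1^{10}$, which forces reliance on the high-moment bounds of Proposition \ref{tight-pro} rather than on the basic energy estimate of Proposition \ref{tight-theo} alone. A secondary point demanding care is the bookkeeping of the small parameter $\delta$: it enters both in the bilinear absorption and in the potential estimate, and one must check that these combine precisely into the stated coefficient $2(1-2\delta)$ while all the $C(2,\delta)$-type constants remain finite and $\varepsilon$-independent.
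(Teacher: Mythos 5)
Your proposal follows essentially the same route as the paper: It\^{o}'s formula applied to $\|u^\varepsilon\|_2^2$, absorption of the bilinear term via Lemma \ref{lemma-b}(iii) with $m=2$ at the cost of $C(\delta)\|u^\varepsilon\|_1^6\|u^\varepsilon\|^4$ (controlled by Proposition \ref{tight-pro}), a Young-inequality treatment of the potential term producing the second $\delta\|u^\varepsilon\|_3^2$ and hence the factor $2(1-2\delta)$, and closure by Burkholder--Davis--Gundy plus Gronwall. This matches the paper's argument in all essential steps, so the proposal is correct.
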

\begin{proof}
The It\^{o}'s formula yields
\begin{eqnarray}\label{theo-2equ1}
&&\frac{1}{2}\frac{d}{dt}\|u^\varepsilon(t)\|_2^2=-\langle \Delta^2 u^\varepsilon(t), Au^\varepsilon(t)+B(u^\varepsilon(t))\rangle\nonumber\\&&\quad\quad\quad\quad\quad\quad\quad+\langle \Delta^2 u^\varepsilon(t) ,q^\varepsilon(x,\omega)u^\varepsilon(t)\rangle\nonumber\\&&\quad\quad\quad\quad\quad\quad\quad+\langle \Delta^2 u^\varepsilon(t) ,\sigma^\varepsilon(t,u^\varepsilon(t))dW(t)\rangle\nonumber\\&&\quad\quad\quad\quad\quad\quad\quad+\frac{1}{2}\|\partial_x^2(t,u^\varepsilon(t))\Delta u^\varepsilon(t)\|_{\mathcal{L}_2^Q}^2
\end{eqnarray}
By $(iii)$ of Lemma \ref{lemma-b}, for all $\delta>0$, we have
\begin{eqnarray}\label{theo-2equ2}
\mid \langle A^2 u^\varepsilon(t),B(u^\varepsilon(t))\rangle \mid\leq
\delta\|u^\varepsilon(t)\|_3^2+C(\delta)\|u^\varepsilon(t)\|_1^6\|u^\varepsilon(t)\|^4.
\end{eqnarray}
With the help Young inequality and $(\mathbf{H_2}) $, we deduce
\begin{eqnarray}\label{theo-2equ3}
&&\mid \langle \Delta^2 u^\varepsilon(t),q^\varepsilon(x,\omega)u^\varepsilon(t)\rangle \mid\nonumber\\&&\quad\quad\leq
\delta\|u^\varepsilon(t)\|_3^2+C(\delta)\|u^\varepsilon(t)\|_1^2\nonumber\\&&\quad\quad\leq
\delta\|u^\varepsilon(t)\|_3^2+C(\delta)\|u^\varepsilon(t)\|_2^2.
\end{eqnarray}
Combining (\ref{theo-2equ2}), (\ref{theo-2equ3}) with (\ref{theo-2equ1}) yields
\begin{eqnarray}\label{theo-2equ4}
&&\frac{1}{2}\frac{d}{dt}\|u^\varepsilon(t)\|_2^2\leq (2\delta-1)\|u^\varepsilon(t)\|_3^2+C(\delta)\|u^\varepsilon(t)\|_1^6\|u^\varepsilon(t)\|^4+C(\delta)\|u^\varepsilon(t)\|_2^2\nonumber\\&&\quad\quad\quad\quad\quad\quad\quad+\langle \Delta^2 u^\varepsilon(t) ,\sigma^\varepsilon(t,u^\varepsilon(t))dW(t)\rangle+\frac{1}{2}\|\partial_x^2(t,u^\varepsilon(t))\Delta u^\varepsilon(t)\|_{\mathcal{L}_2^Q}^2.
\end{eqnarray}
Taking $0<\delta<<\frac{1}{2}$ and integrating from $0$ to $t$ of equation (\ref{theo-2equ4}) yields
\begin{eqnarray}\label{theo-2equ5}
&&\|u^\varepsilon(t)\|_2^2+2(1-2\delta)\int_0^t\|u^\varepsilon(s)\|_3^2ds\nonumber\\&&\quad\leq \|u_0\|_2^2 +2C(\delta)\int_
0^t\|u^\varepsilon(s)\|_1^6\|u^\varepsilon(s)\|^4ds+2C(\delta)\int_0^t\|u^\varepsilon(s)\|_2^2ds\nonumber\\&&\quad\quad+2\int_0^t\langle \Delta^2 u^\varepsilon(s) ,\sigma^\varepsilon(s,u^\varepsilon(s))dW(s)\rangle+\int_0^t\|\partial_x^2(s,u^\varepsilon(s))\Delta u^\varepsilon(s)\|_{\mathcal{L}_2^Q}^2ds.
\end{eqnarray}
Furthermore,
\begin{eqnarray}\label{theo-2equ6}
&&\sup_{0\leq t\leq T}\|u^\varepsilon(t)\|_2^2+2(1-2\delta)\int_0^T\|u^\varepsilon(s)\|_3^2ds\nonumber\\&&\quad\leq \|u_0\|_2^2 +2C(\delta)\int_
0^T\|u^\varepsilon(s)\|_1^6\|u^\varepsilon(s)\|^4ds\nonumber+2C(\delta)\int_0^T\sup_{0\leq \tau \leq s}\|u^\varepsilon(\tau)\|_2^2ds\\&&\quad\quad+2\sup_{0\leq t\leq T}\int_0^t\langle \Delta^2 u^\varepsilon(s) ,\sigma^\varepsilon(s,u^\varepsilon(s))dW(s)\rangle\nonumber\\&&\quad\quad+\int_0^T\|\partial_x^2(s,u^\varepsilon(s))\Delta u^\varepsilon(s)\|_{\mathcal{L}_2^Q}^2ds.
\end{eqnarray}
By Burkholder-Davis-Gundy inequality, Young inequality, H\"{o}lder inequality and $(\mathbf{H_{3}})$,
\begin{eqnarray}\label{theo-2equ71}
&&\mathbb{E}\sup_{0\leq t\leq T}\int_0^t \langle \Delta^2 u^\varepsilon(s), \sigma^\varepsilon(s,u^\varepsilon(s))dW(s)\rangle\nonumber\\&&
\quad\quad\leq\mathbb{E}\left(\int_0^T\langle \Delta^2 u^\varepsilon(s),\sigma^\varepsilon(s,u^\varepsilon(s))\rangle^2 ds\right)^{\frac{1}{2}}\nonumber\\&&\quad\quad\leq\mathbb{E}\left(\int_0^T\|u^\varepsilon(s)\|_2^2\| \partial_x^2\sigma^\varepsilon(s,u^\varepsilon(s)) \Delta u^\varepsilon(s)\|_{\mathcal{L}_2^Q}^2ds\right)^{\frac{1}{2}}\nonumber\\&&\quad\quad\leq
\mathbb{E}\left(\sup_{0\leq s\leq T}\|\partial_x^2\sigma^\varepsilon(s,u^\varepsilon(s))\Delta u^\varepsilon(s) \|_{\mathcal{L}_2^Q}^2\int_0^T\|u^\varepsilon(s)\|_2^2ds\right)^{\frac{1}{2}}\nonumber\\&&\quad\quad\leq
\frac{C}{2}+\frac{1}{2}\mathbb{E}\int_0^T \|u^\varepsilon(s)\|_2^2ds.
\end{eqnarray}
According to Proposition \ref{tight-pro}, we have
\begin{eqnarray}\label{theo-2equ8}
\int_
0^T\|u^\varepsilon(s)\|_1^6\|u^\varepsilon(s)\|^4ds\leq C_T.
\end{eqnarray}
Plugging (\ref{theo-2equ71}),\;(\ref{theo-2equ8}) into (\ref{theo-2equ6}) yield
\begin{eqnarray*}\label{theo-2equ7}
&&\mathbb{E}\sup_{0\leq t\leq T}\|u^\varepsilon(t)\|_2^2+2(1-2\delta)\mathbb{E}\int_0^T\|u^\varepsilon(s)\|_3^2ds\nonumber\\&&\quad\leq C_T(\mathbb{E}\|u_0\|_2^2+1) +C(\delta)\mathbb{E}\int_0^T\sup_{0\leq \tau \leq s}\|u^\varepsilon(\tau)\|_2^2ds
\end{eqnarray*}
The Gronwall's inequality yields (\ref{theo-2equ}).
\end{proof}
\begin{proposition}\label{tight-2theo1}
 Assume that $(\mathbf{H_{1}})$--$(\mathbf{H_{3}})$,  and $\mathbb{E}\|u_0\|_{V^2}^{2p}\leq C$ for some constant $C >0$ and any $p> 0$ hold. Then, for $0< \delta << \frac{1}{2}$
\begin{eqnarray}\label{theo1-2eequ}
\mathbb{E}\sup_{0\leq t\leq T}\|u^\varepsilon(t)\|_2^{2p}\leq C_{T}\left(\mathbb{E}\|u_0\|_2^{2p}+C\right)+1.
\end{eqnarray}
\end{proposition}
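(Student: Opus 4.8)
\noindent\emph{Proof plan.} The idea is to repeat, at the level of the $V^2$-norm, the moment bootstrap that produced Proposition \ref{tight-pro} from Proposition \ref{tight-theo}, starting now from the integrated $V^2$-energy identity (\ref{theo-2equ5}) obtained in the proof of Proposition \ref{tight-2theo}. First, for $p>1$, I would raise (\ref{theo-2equ5}) to the power $p$, take $\sup_{0\le t\le T}$ and then $\mathbb{E}$; using $(a_1+\cdots+a_5)^p\le C(a_1^p+\cdots+a_5^p)$ and the bound $\|\partial_x^2\sigma^\varepsilon(s,u^\varepsilon(s))\Delta u^\varepsilon(s)\|_{\mathcal{L}_2^Q}\le C\|u^\varepsilon(s)\|_2$ coming from $(\mathbf{H_{3}})$ (which lets the $\mathcal{L}_2^Q$-term of (\ref{theo-2equ5}) be absorbed into the term $\int_0^t\|u^\varepsilon(s)\|_2^2\,ds$), one reaches
$$
\mathbb{E}\sup_{0\le t\le T}\|u^\varepsilon(t)\|_2^{2p}\le C_T\big(\mathbb{E}\|u_0\|_2^{2p}+I_1+I_2+I_3\big),
$$
where $I_1=\mathbb{E}\big(\int_0^T\|u^\varepsilon(s)\|_1^6\|u^\varepsilon(s)\|^4\,ds\big)^p$, $I_2=\mathbb{E}\sup_{0\le t\le T}\big(\int_0^t\|u^\varepsilon(s)\|_2^2\,ds\big)^p$ and $I_3=\mathbb{E}\sup_{0\le t\le T}\big|\int_0^t\langle\Delta^2u^\varepsilon(s),\sigma^\varepsilon(s,u^\varepsilon(s))dW(s)\rangle\big|^p$.

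Next I would estimate the three terms. For $I_1$, Proposition \ref{tight-pro} provides finite moments of every order for $\sup_{0\le t\le T}\|u^\varepsilon(t)\|$ and $\sup_{0\le t\le T}\|u^\varepsilon(t)\|_1$; hence H\"{o}lder's inequality in time followed by the Cauchy--Schwarz inequality in $\omega$ give $I_1\le C_T\,\mathbb{E}\int_0^T\|u^\varepsilon(s)\|_1^{6p}\|u^\varepsilon(s)\|^{4p}\,ds\le C_T\int_0^T\big(\mathbb{E}\|u^\varepsilon(s)\|_1^{12p}\big)^{1/2}\big(\mathbb{E}\|u^\varepsilon(s)\|^{8p}\big)^{1/2}\,ds\le C_T$, that is, the $p$-th moment version of (\ref{theo-2equ8}). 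For $I_2$, H\"{o}lder's inequality in time gives $\big(\int_0^t\|u^\varepsilon(s)\|_2^2\,ds\big)^p\le C_T\int_0^T\|u^\varepsilon(s)\|_2^{2p}\,ds\le C_T\int_0^T\sup_{0\le\tau\le s}\|u^\varepsilon(\tau)\|_2^{2p}\,ds$. For $I_3$, the Burkholder--Davis--Gundy inequality, H\"{o}lder's inequality and $(\mathbf{H_{3}})$, exactly as in (\ref{theo-2equ71}), yield
$$
I_3\le \mathbb{E}\Big(\sup_{0\le s\le T}\|\partial_x^2\sigma^\varepsilon(s,u^\varepsilon(s))\Delta u^\varepsilon(s)\|_{\mathcal{L}_2^Q}^2\int_0^T\|u^\varepsilon(s)\|_2^2\,ds\Big)^{p/2}\le C\,\mathbb{E}\Big(\int_0^T\|u^\varepsilon(s)\|_2^2\,ds\Big)^{p/2},
$$
and then Young's inequality combined with H\"{o}lder's inequality in time (the step $\big(\int_0^T\|u^\varepsilon\|_1^2\,ds\big)^{p/2}\le\frac12+\frac{C_T}{2}\int_0^T\|u^\varepsilon\|_1^{2p}\,ds$ used in Proposition \ref{tight-pro}) gives $I_3\le \frac{C}{2}+\frac{C_T}{2}\,\mathbb{E}\int_0^T\sup_{0\le\tau\le s}\|u^\varepsilon(\tau)\|_2^{2p}\,ds$.

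Collecting these estimates gives $\mathbb{E}\sup_{0\le t\le T}\|u^\varepsilon(t)\|_2^{2p}\le C_T\big(\mathbb{E}\|u_0\|_2^{2p}+C\big)+1+C_T\int_0^T\mathbb{E}\sup_{0\le\tau\le s}\|u^\varepsilon(\tau)\|_2^{2p}\,ds$, and Gronwall's inequality then yields (\ref{theo1-2eequ}) for $p>1$. The case $0<p\le1$ is handled exactly as at the end of the proof of Proposition \ref{tight-pro}: Young's inequality gives $\|u^\varepsilon(s)\|_2^{2p}\le\frac{p}{2}\|u^\varepsilon(s)\|_2^4+1$, so the bound for $p=2$ (already covered by the case $p>1$) implies (\ref{theo1-2eequ}) for all $0<p\le1$. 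I do not expect a genuine obstacle here, since this is a routine bootstrap; the one place that requires care is $I_1$, where the product $\|u^\varepsilon\|_1^6\|u^\varepsilon\|^4$ must be split by H\"{o}lder/Cauchy--Schwarz before invoking the uniform-in-time moment bounds of Proposition \ref{tight-pro}, so that the (finite but large) exponents $12p$ and $8p$ on the two norms are tracked correctly.
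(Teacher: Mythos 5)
Your proposal is correct and follows essentially the same route as the paper: raise the integrated $V^2$-energy inequality (\ref{theo-2equ5}) to the power $p$, bound the stochastic integral via Burkholder--Davis--Gundy together with $(\mathbf{H_{3}})$, control the term $\bigl(\int_0^t\|u^\varepsilon\|_1^6\|u^\varepsilon\|^4\,ds\bigr)^p$ by the moment bounds of Proposition \ref{tight-pro}, close with Gronwall for $p>1$, and reduce $0<p\le 1$ to the previous case by Young's inequality. Your explicit H\"older/Cauchy--Schwarz splitting of $I_1$ merely fills in a detail the paper leaves to a citation of Proposition \ref{tight-pro}.
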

\begin{proof}
For the case $p>1$, we can prove that
\begin{eqnarray}\label{theo1-2equ1}
\mathbb{E}\sup_{0\leq t\leq T}\|u^\varepsilon(t)\|_2^{2p}\leq C_{T}\left(\mathbb{E}\|u_0\|_2^{2p}+C\right).
\end{eqnarray}
From (\ref{theo-2equ5}), we obtain
\begin{eqnarray}\label{theo1-2equ2}
&&\mathbb{E}\sup_{0\leq t\leq T}\|u^\varepsilon(t)\|_2^{2p}\leq C_T(\mathbb{E}\|u_0\|_2^{2p}+\mathbb{E}\sup_{0\leq t \leq T}\left(\int_0^t \|u^\varepsilon(s)\|_2^2ds\right)^p\nonumber\\&&\quad\quad\quad\quad\quad\quad\quad\quad\quad+C(\delta)\mathbb{E}\sup_{0\leq t \leq T}\left(\int_0^t \|u^\varepsilon(s)\|_1^6\|u^\varepsilon(s)\|^4ds\right)^p\nonumber\\&&\quad\quad\quad\quad\quad\quad\quad\quad\quad+
\mathbb{E}\sup_{0\leq t\leq T}\left|\int_0^t\langle \Delta^2 u^\varepsilon(s) ,\sigma^\varepsilon(s,u^\varepsilon(s))dW(s)\rangle\right|^p+1).
\end{eqnarray}
The Burkholder--Davis--Gundy inequality, Young inequality, H\"{o}lder inequality and $(\mathbf{H_{3}})$ yield
\begin{eqnarray}\label{theo1-2equ3}
&&\mathbb{E}\sup_{0\leq t\leq T}\left|\int_0^t \langle \Delta^2 u^\varepsilon(s), \sigma^\varepsilon(s,u^\varepsilon(s))dW(s)\rangle\right|^p\nonumber\\&&
\quad\quad\leq\mathbb{E}\left(\int_0^T\langle \Delta^2 u^\varepsilon(s),\sigma^\varepsilon(s,u^\varepsilon(s))\rangle^2 ds\right)^{\frac{p}{2}}\nonumber\\&&\quad\quad\leq\mathbb{E}\left(\int_0^T\|u^\varepsilon(s)\|_2^2\| \partial_x^2\sigma^\varepsilon(s,u^\varepsilon(s)) \Delta u^\varepsilon(s)\|_{\mathcal{L}_2^Q}^2ds\right)^{\frac{p}{2}}\nonumber\\&&\quad\quad\leq
\mathbb{E}\left(\sup_{0\leq s\leq T}\|\partial_x^2\sigma^\varepsilon(s,u^\varepsilon(s))\Delta u^\varepsilon(s) \|_{\mathcal{L}_2^Q}^2\int_0^T\|u^\varepsilon(s)\|_2^2ds\right)^{\frac{p}{2}}\nonumber\\&&\quad\quad\leq
C\mathbb{E}\left(\int_0^T \|u^\varepsilon(s)\|_2^2ds\right)^{\frac{p}{2}}.
\end{eqnarray}
The Young inequality and H\"{o}lder inequality  yield
\begin{eqnarray*}\label{theo1-2equ4}
&&\left(\int_0^T \|u^\varepsilon(s)\|_2^2 ds\right)^{\frac{p}{2}}\\&&\quad\quad\leq \frac{1}{2}+\frac{1}{2}\left(\int_0^T \|u^\varepsilon(s)\|_2^2 ds\right)^{p}\\&&\quad\quad\leq\frac{1}{2}+\frac{C_T}{2}\int_0^T \|u^\varepsilon(s)\|_2^{2p} ds.
\end{eqnarray*}
The (\ref{theo1-2equ3}) is written as
\begin{eqnarray}\label{theo1-2equ5}
\mathbb{E}\sup_{0\leq t\leq T}\left|\int_0^t \langle \Delta^2 u^\varepsilon(s), \sigma^\varepsilon(s,u^\varepsilon(s))dW(s)\rangle\right|^p
\leq\frac{C}{2}+\frac{C_T}{2}\mathbb{E}\int_0^T \|u^\varepsilon(s)\|_2^{2p} ds.
\end{eqnarray}
The Proposition \ref{tight-pro} yields
\begin{eqnarray}\label{theo1-2equ6}
\mathbb{E}\sup_{0\leq t\leq T}\left(\int_
0^t\|u^\varepsilon(s)\|_1^6\|u^\varepsilon(s)\|^4ds\right)^p\leq C_T.
\end{eqnarray}
The  H\"{o}lder inequality yields
\begin{eqnarray}\label{theo1-2equ7}
\mathbb{E}\sup_{0\leq t\leq T}\left(\int_0^t\|u^\varepsilon(s)\|_2^2ds\right)^p \leq C_T \mathbb{E}\int_0^T \|u^\varepsilon(s)\|_2^{2p}ds.
\end{eqnarray}
Combining (\ref{theo1-2equ5}), (\ref{theo1-2equ6}), (\ref{theo1-2equ7}) with (\ref{theo1-2equ2}) yields
\begin{eqnarray*}\label{theo1-2equ8}
&&\mathbb{E}\sup_{0\leq t\leq T}\|u^\varepsilon(t)\|_2^{2p}\leq C_T\left(\mathbb{E}\|u_0\|_2^{2p}+C\right)\\&&\quad\quad\quad\quad\quad\quad\quad\quad\quad+C_T \mathbb{E}\int_0^T \sup_{0\leq \tau \leq s}\|u^\varepsilon(\tau)\|_2^{2p}ds.
\end{eqnarray*}
The  Gronwall's inequality yields (\ref{theo1-2equ1}).

For the case $0< p \leq 1$,
\begin{eqnarray*}\label{theo1-2equ9}
\|u^\varepsilon(s)\|_2^{2p}\leq \frac{1}{p'}\|u^\varepsilon(s)\|_2^{2pp'}+ \frac{1}{q'}.
\end{eqnarray*}
Here we have used Young inequality
\begin{eqnarray*}\label{theo1-2equ10}
ab\leq \frac{1}{p'} a^{p'}+\frac{1}{q'} b^{q'},
\end{eqnarray*}
where $p',q'>1$\;,$\frac{1}{p'}+\frac{1}{q'}=1$.
Selecting $p'=2p^{-1}$ yields
\begin{eqnarray*}\label{theo1-2equ11}
\|u^\varepsilon(s)\|_2^{2p}\leq \frac{p}{2}\|u^\varepsilon(s)\|_2^4+1.
\end{eqnarray*}
The (\ref{theo1-2equ1}) deduces
\begin{eqnarray*}\label{theo1-2equ12}
\mathbb{E}\sup_{0\leq s\leq T}\|u^\varepsilon(s)\|_2^4\leq C_T\left(\mathbb{E}\|u_0\|_2^{2p}+C\right).
\end{eqnarray*}
The above estimations yields (\ref{theo1-2eequ}).
\end{proof}
To derive the tightness of the solution $\{u^\varepsilon(\cdot,\cdot)\}$, we also need show the ${\rm H\ddot{o}lder}$ continuity of the solution $\{u^\varepsilon(\cdot,\cdot)\}$. Integrating from $s$ to $t$ yields
\begin{eqnarray*}\label{theo1-2equ13}
&&u^\varepsilon(t)-u^\varepsilon(s)=\\&&\quad\quad-\int_s^t(Au^\varepsilon(\tau)+B(u^\varepsilon(\tau)))d\tau+\int_s^tq^\varepsilon(x,\omega)u^\varepsilon(\tau)
d\tau\\&&\quad\quad\quad+\int_s^t\sigma^\varepsilon(\tau,u^\varepsilon(\tau))dW(\tau).
\end{eqnarray*}
\begin{proposition}\label{tight-2theo2}
 Assume that $(\mathbf{H_{1}})$--$(\mathbf{H_{3}})$,  and $u_0\in {V^2}$ hold.  For any $T>0$, there is  a constant $C_T>0$ such that
\begin{eqnarray}\label{theo2-equ}
\mathbb{E}\| u^\varepsilon(t)-u^\varepsilon(s)\|_1^2\leq  C_T \mid t-s\mid^{\frac{1}{2}},
\end{eqnarray}
where $s,t\in[0,T]$.
\end{proposition}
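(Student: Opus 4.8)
The plan is to not estimate the increment $u^{\varepsilon}(t)-u^{\varepsilon}(s)$ directly in $V^{1}$ — doing so would force us to differentiate the highly oscillating potential $q^{\varepsilon}$, whose spatial gradient is of order $\varepsilon^{-1}$, and to control $B(u^{\varepsilon})$ in $H^{1}$ — but instead to interpolate between the base space $H$ and $V^{2}$. Taking $a=0$, $b=2$, $\theta=\tfrac12$ in Lemma \ref{lemma-ine1} gives $\|v\|_{1}\le\|v\|^{1/2}\|v\|_{2}^{1/2}$, so that, applying this with $v=u^{\varepsilon}(t)-u^{\varepsilon}(s)$ and then the Cauchy--Schwarz inequality in $\omega$,
\begin{eqnarray*}
\mathbb{E}\|u^{\varepsilon}(t)-u^{\varepsilon}(s)\|_{1}^{2}\le\left(\mathbb{E}\|u^{\varepsilon}(t)-u^{\varepsilon}(s)\|^{2}\right)^{\frac12}\left(\mathbb{E}\|u^{\varepsilon}(t)-u^{\varepsilon}(s)\|_{2}^{2}\right)^{\frac12}.
\end{eqnarray*}
The second factor is bounded by a constant $C_{T}$ independent of $\varepsilon$, using $\|u^{\varepsilon}(t)-u^{\varepsilon}(s)\|_{2}^{2}\le 2\sup_{0\le\tau\le T}\|u^{\varepsilon}(\tau)\|_{2}^{2}$ together with Proposition \ref{tight-2theo} (recall $u_{0}\in V^{2}$); crucially, this factor carries no smallness in $|t-s|$, so all the time decay must come from the first factor, and the exponent $\tfrac12$ in the statement is exactly half of the power $|t-s|^{1}$ produced by a mean-square $H$-increment estimate.

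It thus remains to show $\mathbb{E}\|u^{\varepsilon}(t)-u^{\varepsilon}(s)\|^{2}\le C_{T}|t-s|$. Starting from the integral identity displayed just before the statement, I would split $u^{\varepsilon}(t)-u^{\varepsilon}(s)$ into the four pieces $-\int_{s}^{t}Au^{\varepsilon}(\tau)\,d\tau$, $-\int_{s}^{t}B(u^{\varepsilon}(\tau))\,d\tau$, $\int_{s}^{t}q^{\varepsilon}(x,\omega)u^{\varepsilon}(\tau)\,d\tau$ and $\int_{s}^{t}\sigma^{\varepsilon}(\tau,u^{\varepsilon}(\tau))\,dW(\tau)$, and bound the squared $H$-norm of each. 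For the three Bochner integrals, the Cauchy--Schwarz inequality in time gives $\|\int_{s}^{t}f\,d\tau\|^{2}\le|t-s|\int_{s}^{t}\|f\|^{2}\,d\tau$; then $\|Au^{\varepsilon}\|=\|u^{\varepsilon}\|_{2}$, $\|B(u^{\varepsilon})\|\le C\|u^{\varepsilon}\|_{L^{4}}\|\nabla u^{\varepsilon}\|_{L^{4}}\le C\|u^{\varepsilon}\|^{1/2}\|u^{\varepsilon}\|_{1}\|u^{\varepsilon}\|_{2}^{1/2}\le C\|u^{\varepsilon}\|_{2}^{2}$ by Lemma \ref{lemma-ine} (applied to $u^{\varepsilon}$ and to $\nabla u^{\varepsilon}$), and $\|q^{\varepsilon}u^{\varepsilon}\|\le C\|u^{\varepsilon}\|$ by $(\mathbf{H_{2}})$ — note $q^{\varepsilon}$ is never differentiated here. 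Combined with Propositions \ref{tight-2theo} and \ref{tight-2theo1} (the latter with $p=2$, bounding $\mathbb{E}\sup_{[0,T]}\|u^{\varepsilon}\|_{2}^{4}$), each of these three terms contributes at most $C_{T}|t-s|^{2}\le C_{T}|t-s|$ for $s,t\in[0,T]$. For the stochastic term, the It\^{o} isometry and the boundedness of $\sigma$ in $(\mathbf{H_{3}})$ give $\mathbb{E}\|\int_{s}^{t}\sigma^{\varepsilon}(\tau,u^{\varepsilon}(\tau))\,dW(\tau)\|^{2}=\mathbb{E}\int_{s}^{t}\|\sigma^{\varepsilon}(\tau,u^{\varepsilon}(\tau))\|_{\mathcal{L}_{2}^{Q}}^{2}\,d\tau\le C|t-s|$. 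Summing the four estimates yields $\mathbb{E}\|u^{\varepsilon}(t)-u^{\varepsilon}(s)\|^{2}\le C_{T}|t-s|$, and substitution into the interpolation inequality above gives $\mathbb{E}\|u^{\varepsilon}(t)-u^{\varepsilon}(s)\|_{1}^{2}\le C_{T}|t-s|^{1/2}$.

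I expect the only genuine subtlety to be the choice of the two endpoint spaces in the interpolation: the lower endpoint must be low enough ($H$) that $q^{\varepsilon}$ and $B$ can be handled with the crude, $\varepsilon$-independent bounds coming from $(\mathbf{H_{2}})$, Ladyzhenskaya's inequality and $(\mathbf{H_{3}})$, while the higher endpoint must be no higher ($V^{2}$) than the regularity for which Section \ref{sec:tig-woutsf} supplies a uniform-in-$\varepsilon$ moment bound. Everything else reduces to routine applications of Cauchy--Schwarz in time and in $\omega$, the It\^{o} isometry, and the a priori estimates already established.
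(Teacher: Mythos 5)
Your proposal is correct and follows essentially the same route as the paper: both first establish $\mathbb{E}\|u^{\varepsilon}(t)-u^{\varepsilon}(s)\|^{2}\le C_{T}|t-s|$ by splitting the increment into the three Bochner integrals plus the stochastic integral (using Cauchy--Schwarz in time, $(\mathbf{H_2})$, $(\mathbf{H_3})$, the It\^{o} isometry, and the uniform $V^{2}$ moment bounds from Propositions \ref{tight-2theo} and \ref{tight-2theo1}), and then upgrade to the $V^{1}$ estimate via the interpolation $\|v\|_{1}\le C\|v\|^{1/2}\|v\|_{2}^{1/2}$ and Cauchy--Schwarz in $\omega$. The only cosmetic difference is your Ladyzhenskaya-based bound $\|B(u^{\varepsilon})\|\le C\|u^{\varepsilon}\|_{2}^{2}$ versus the paper's $\|B(u)\|\le C\|u\|_{2}\|u\|_{1}$, which play the same role.
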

\begin{proof}
By  estimation (\ref{theo-equ}), estimation (\ref{theo1-2eequ}), $(\mathbf{H_2})$ and $(\mathbf{H_3})$, we obtain
\begin{eqnarray*}\label{theo2-equ1}
\mathbb{E}\left\|\int_s^t -A u^\varepsilon(\tau)d\tau\right\|^2\leq C|t-s|^2\mathbb{E}\sup_{0\leq t\leq T}\|u^\varepsilon(t)\|_2^2\leq C|t-s|^2,
\end{eqnarray*}
\begin{eqnarray*}\label{theo2-equ2}
\mathbb{E}\left\|\int_s^t-B(u^\varepsilon(\tau))d\tau\right\|^2\leq C|t-s|^2\mathbb{E}\sup_{0\leq t \leq T}\|u^\varepsilon(t)\|_2^4\leq C|t-s|^2,
\end{eqnarray*}
where we have used $B(u)\leq C \|u\|_2\|u\|_1$, $C$ is a constant.
\begin{eqnarray*}\label{theo2-equ3}
\mathbb{E}\left\|\int_s^t q^\varepsilon(x,\omega)u^\varepsilon(\tau)d\tau\right\|^2\leq C|t-s|^2\sup_{0\leq t \leq T}\|u^\varepsilon(t)\|^2\leq C|t-s|^2,
\end{eqnarray*}
and
\begin{eqnarray*}\label{theo2-equ4}
\mathbb{E}\left\|\int_s^t \sigma^\varepsilon(\tau,u^\varepsilon(\tau)dW(\tau)\right\|^2\leq C|t-s|.
\end{eqnarray*}
Thus,
\begin{eqnarray}\label{theo2-equ5}
\mathbb{E}\| u^\varepsilon(t)-u^\varepsilon(s)\|^2&\leq &C|t-s|+C|t-s|^2\\&\leq& C|t-s|(1+T).
\end{eqnarray}
It follows from Lemma \ref{lemma-ine1} that
\begin{eqnarray*}\label{theo2-equ6}
\|u\|_1\leq C\|u\|_2^{\frac{1}{2}}\|u\|^{\frac{1}{2}}.
\end{eqnarray*}
Thereby,
\begin{eqnarray*}\label{theo2-equ7}
&&\mathbb{E}\| u^\varepsilon(t)-u^\varepsilon(s)\|_1^2\\&&
\quad\quad\leq C \mathbb{E}\|u^\varepsilon(t)-u^\varepsilon(s)\|_2 \|u^\varepsilon(t)-u^\varepsilon(s)\|
\\&&\quad\quad\leq C (\mathbb{E}\|u^\varepsilon(t)-u^\varepsilon(s)\|_2^2)^{\frac{1}{2}}(\mathbb{E}\|u^\varepsilon(t)-u^\varepsilon(s)\|^2)^{\frac{1}{2}}
\\&&\quad\quad\leq C (\mathbb{E}\sup_{0\leq t\leq T}\|u^\varepsilon(t))\|_2^2)^{\frac{1}{2}}(\mathbb{E}\|u^\varepsilon(t)-u^\varepsilon(s)\|^2)^{\frac{1}{2}}
\\&&\quad\quad\leq C \mid t-s\mid^{\frac{1}{2}}(1+T)^{\frac{1}{2}}.
\end{eqnarray*}
\end{proof}
By Lemma~{\ref{lem4}}, we have the following result.
\begin{lemma}\label{lem1}
 Assume $(\mathbf{H_1})$--$(\mathbf{H_3})$, $\mathbb{E}\|u_0\|_{V^2}^2\leq C$  for some constant $C >0$ hold. For every $T>0$\,, the family $\{u^{\varepsilon}(\cdot,\cdot)\}_{0<\epsilon\leq1}$ is tight in space $C(0,T; X^1)$.
\end{lemma}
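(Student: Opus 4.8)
The plan is to produce, for every $\eta>0$, a set $K_\eta$ that is relatively compact in $X^1=C(0,T;V^1)$ and satisfies $\inf_{0<\varepsilon\le1}\mathbb{P}(u^\varepsilon\in K_\eta)\ge 1-\eta$; tightness of the laws of $\{u^\varepsilon(\cdot,\cdot)\}$ in $X^1$ is then immediate. Two uniform-in-$\varepsilon$ a priori estimates feed the argument. First, since $u_0\in V^2$ is deterministic, Propositions \ref{tight-2theo} and \ref{tight-2theo1} give
\begin{eqnarray*}
\sup_{0<\varepsilon\le1}\mathbb{E}\sup_{0\le t\le T}\|u^\varepsilon(t)\|_2^{2p}\le C_{T,p}\quad\text{for every }p\ge1 .
\end{eqnarray*}
Second, Proposition \ref{tight-2theo2} yields $\mathbb{E}\|u^\varepsilon(t)-u^\varepsilon(s)\|_1^{2}\le C_T|t-s|^{1/2}$; running the same computation at the level of $2p$-th moments --- bounding the stochastic increment $\int_s^t\sigma^\varepsilon(\tau,u^\varepsilon(\tau))dW(\tau)$ by the Burkholder--Davis--Gundy inequality and the boundedness of $\sigma$ from $(\mathbf{H_{3}})$, the drift increments $\int_s^t(Au^\varepsilon+B(u^\varepsilon))d\tau$ and $\int_s^tq^\varepsilon u^\varepsilon d\tau$ by $|t-s|^{2p}$ times suprema controlled through $(\mathbf{H_{2}})$, the inequality $\|B(u)\|\le C\|u\|_2\|u\|_1$ and Propositions \ref{tight-theo}--\ref{tight-2theo1}, and then interpolating through $\|\cdot\|_1\le C\|\cdot\|_2^{1/2}\|\cdot\|^{1/2}$ (Lemma \ref{lemma-ine1}) together with the Cauchy--Schwarz inequality --- gives
\begin{eqnarray*}
\sup_{0<\varepsilon\le1}\mathbb{E}\|u^\varepsilon(t)-u^\varepsilon(s)\|_1^{2p}\le C_{T,p}\,|t-s|^{p/2},\qquad s,t\in[0,T],\ p\ge1 .
\end{eqnarray*}

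Choosing $p$ large enough that $p/2-1>0$, say $p=6$, the Kolmogorov--Chentsov continuity criterion applied to the $V^1$-valued process $u^\varepsilon$ shows that the sample paths lie in $C^{\gamma}(0,T;V^1)$ for every $\gamma<\tfrac14-\tfrac1{2p}$ and --- what matters here --- the corresponding H\"{o}lder seminorm obeys the $\varepsilon$-uniform bound
\begin{eqnarray*}
\sup_{0<\varepsilon\le1}\mathbb{E}\,[u^\varepsilon]_{C^{\gamma}(0,T;V^1)}^{2p}\le C_{T,p,\gamma} .
\end{eqnarray*}
Fix such a $\gamma$, for instance $\gamma=\tfrac18$.

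For $R>0$ set
\begin{eqnarray*}
K_R=\Big\{\,v\in C(0,T;V^1)\ :\ \sup_{0\le t\le T}\|v(t)\|_2\le R,\ \ [v]_{C^{1/8}(0,T;V^1)}\le R\,\Big\}.
\end{eqnarray*}
Then $\overline{K_R}$ is compact in $X^1$: for each fixed $t$ the section $\{v(t):v\in K_R\}$ is bounded in $V^2$, hence relatively compact in $V^1$ because the embedding $V^2\hookrightarrow V^1$ is compact (Rellich--Kondrachov on $\mathbb{T}^2$), while the uniform H\"{o}lder bound makes $K_R$ equicontinuous in $C(0,T;V^1)$; so $\overline{K_R}$ is compact by the Arzel\`a--Ascoli theorem (an Aubin--Lions--Simon type conclusion, cf. Lemma \ref{lem4}). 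By Chebyshev's inequality and the two moment bounds above,
\begin{eqnarray*}
\mathbb{P}\big(u^\varepsilon\notin K_R\big)\le \mathbb{P}\Big(\sup_{0\le t\le T}\|u^\varepsilon(t)\|_2>R\Big)+\mathbb{P}\big([u^\varepsilon]_{C^{1/8}(0,T;V^1)}>R\big)\le \frac{C_{T,p,\gamma}}{R^{2p}}
\end{eqnarray*}
uniformly in $\varepsilon\in(0,1]$, so, given $\eta>0$, it suffices to take $R=R(\eta)$ with $C_{T,p,\gamma}R^{-2p}\le\eta$ and to put $K_\eta=\overline{K_R}$.

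The principal difficulty is the stochastic forcing: because the equation for $u^\varepsilon$ contains the term $\sigma^\varepsilon(t,u^\varepsilon(t))\,dW(t)$, the paths of $u^\varepsilon$ carry a white-noise-in-time component and hence possess no time derivative in any $L^{p_0}(0,T;E_0)$, which precludes feeding them directly into a deterministic compactness lemma. The argument is therefore anchored to the pathwise temporal H\"{o}lder regularity of $u^\varepsilon$ extracted from Proposition \ref{tight-2theo2} (upgraded to higher moments); the delicate point is to verify that the H\"{o}lder exponent so obtained is large enough that, combined with the spatial smoothing $V^2\hookrightarrow\hookrightarrow V^1$, it yields relative compactness in the full path space $C(0,T;V^1)$ rather than only in a weaker topology such as $L^2(0,T;V^1)$.
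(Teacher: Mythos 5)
Your argument is correct, but it is not the route the paper takes: the paper offers no written proof of this lemma at all, merely the sentence ``By Lemma~\ref{lem4}, we have the following result,'' i.e.\ it appeals to the deterministic Aubin--Lions--Simon criterion with the a priori bounds of Propositions~\ref{tight-theo}--\ref{tight-2theo2} implicitly supplying the hypotheses. You instead construct the compact sets by hand: you upgrade the increment bound of Proposition~\ref{tight-2theo2} to $2p$-th moments (a routine but genuinely necessary step, correctly sketched via Burkholder--Davis--Gundy for the noise term, crude $|t-s|^{2p}$ bounds for the drift, and interpolation plus Cauchy--Schwarz), pass through Kolmogorov--Chentsov to get an $\varepsilon$-uniform moment bound on the $C^{1/8}(0,T;V^1)$ seminorm, and then combine the uniform $V^2$ bound of Proposition~\ref{tight-2theo1} with the compact embedding $V^2\hookrightarrow V^1$ and Arzel\`a--Ascoli before finishing with Chebyshev. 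This buys something real: as you note, Lemma~\ref{lem4} as stated requires the distributional time derivative to be bounded in some $L^{p_0}(0,T;E_0)$, which the paths of $u^\varepsilon$ do not satisfy because of the stochastic integral, so the paper's one-line citation does not apply verbatim and would need to be replaced either by Simon's fractional variant (uniform bounds in $W^{\alpha,p}(0,T;E_0)$, which is exactly what Proposition~\ref{tight-2theo2} and its higher-moment version provide) or by your H\"older--Arzel\`a--Ascoli construction. Your version is therefore the more careful and self-contained of the two; the only point worth writing out in full if this were to replace the paper's argument is the $2p$-th moment increment estimate, since the paper only proves the $p=1$ case.
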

\section{Proof of Theorem \ref{main-theom1}}\label{sec:lim}
In this section, we give the proof  of Theorem \ref{main-theom1}.  By the tightness of $\{u^{\varepsilon}(\cdot,\cdot)\}$, there is a subsequence converges in distribution to $C(0,T; X^1)$, the subsequence is written as $\{{u}^{\varepsilon}(\cdot,\cdot)\}$. By the Skorohod theorem \cite{Bi} one can construct a new probability space and new variable without changing the distribution such that $\{u^{\varepsilon}(\cdot,\cdot)\}$ (here we don't change the notations) converges almost surely to $u(\cdot,\cdot)$ in space  $C(0,T; X^1)$.  Next, we determine the limit process $u(\cdot,\cdot)$.

Before presenting the proof of Theorem \ref{main-theom1}, we first show the following  lemmas.
\begin{lemma}\label{pf-lem}
For all  $\varphi(x)\in C_0^\infty(\mathbb{T}^2)$, $\phi(t)\in C^\infty (0,T)$ such that
\begin{eqnarray*}
\lim_{\varepsilon\to 0}\int_0^T\int_{\mathbb{T}^2}(q(\mathbf{T}_{\frac{x}{\varepsilon}}\omega) u^{\varepsilon}(x,t))\varphi(x)\phi(t)dxdt=\int_0^T\int_{\mathbb{T}^2}\bar{q}u(x,t)\varphi(x)\phi(t)dxdt.
\end{eqnarray*}
\end{lemma}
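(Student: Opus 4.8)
The plan is to combine the almost-sure strong convergence $u^\varepsilon \to u$ in $C(0,T;X^1)$ (obtained via tightness and the Skorohod representation) with the ergodic behaviour of the oscillating coefficient $q(\mathbf{T}_{x/\varepsilon}\omega)$. Write the integrand as
\begin{eqnarray*}
q(\mathbf{T}_{\frac{x}{\varepsilon}}\omega)u^\varepsilon(x,t)
= q(\mathbf{T}_{\frac{x}{\varepsilon}}\omega)\bigl(u^\varepsilon(x,t)-u(x,t)\bigr)
+ q(\mathbf{T}_{\frac{x}{\varepsilon}}\omega)u(x,t).
\end{eqnarray*}
The first term is controlled by $(\mathbf{H_2})$: since $|q|\le C$ uniformly in $x$ and $\omega$, the contribution of this term to the space-time integral against $\varphi(x)\phi(t)$ is bounded by $C\|\varphi\|_{L^2}\|\phi\|_{L^2}\,\|u^\varepsilon-u\|_{L^2(0,T;L^2(\mathbb{T}^2))}$, which tends to $0$ because convergence in $C(0,T;X^1)$ implies convergence in $L^2(0,T;L^2)$. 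So it remains to pass to the limit in $\int_0^T\int_{\mathbb{T}^2} q(\mathbf{T}_{x/\varepsilon}\omega)u(x,t)\varphi(x)\phi(t)\,dx\,dt$, where now the oscillation interacts only with the fixed (limit) function.

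For this second term I would invoke the Birkhoff ergodic theorem in its "homogenization" (continuous-parameter, stochastic two-scale) form: since $q(\cdot,\omega)$ is stationary and ergodic with $q(x,\omega)=q(\mathbf{T}_x\omega)$, the family $q(\tfrac{x}{\varepsilon},\omega)$ converges weakly-$\ast$ in $L^\infty(\mathbb{T}^2)$ (hence weakly in every $L^p(\mathbb{T}^2)$, $p<\infty$), $\mathbb{P}$-almost surely, to the constant $\mathbb{E}\,q(0,\omega)=\bar q$. Equivalently, by Lemma \ref{lem12} it suffices to know that $q(\tfrac{x}{\varepsilon},\omega)$ is bounded in $L^p$ on the bounded region $\mathbb{T}^2\times[0,T]$ — immediate from $(\mathbf{H_2})$ — and that it converges $\mathbb{P}$-a.s. in the appropriate averaged sense, which is exactly Birkhoff's theorem applied to the ergodic dynamical system $\mathbf{T}_x$. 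Testing this weak convergence against the fixed function $u(x,t)\varphi(x)\phi(t)\in L^1(0,T;L^{p'}(\mathbb{T}^2))$ gives
\begin{eqnarray*}
\lim_{\varepsilon\to0}\int_0^T\int_{\mathbb{T}^2} q(\mathbf{T}_{\frac{x}{\varepsilon}}\omega)u(x,t)\varphi(x)\phi(t)\,dx\,dt
= \bar q\int_0^T\int_{\mathbb{T}^2} u(x,t)\varphi(x)\phi(t)\,dx\,dt,
\end{eqnarray*}
which is the claimed identity. One has to be slightly careful that the relevant exceptional $\mathbb{P}$-null set does not depend on $\varphi$ or $\phi$; this is handled by first fixing a countable dense family of test functions and then using the uniform $L^\infty$ bound to extend by density.

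The main obstacle is the product of two sequences that only converge weakly — the classical difficulty in homogenization. The reason the argument closes here is that the splitting above reduces it to a product of a weakly convergent sequence ($q(\tfrac{x}{\varepsilon},\cdot)\rightharpoonup\bar q$) against a \emph{strongly} convergent one (indeed a fixed function $u$), which is legitimate; the strong convergence $u^\varepsilon\to u$ needed to perform that splitting is precisely what the tightness estimates of Section \ref{sec:tig-woutsf} together with the Skorohod theorem provide. A secondary technical point is the passage from the scalar ergodic theorem to the weak-$L^p$ convergence of the rescaled field, together with the measurability/joint-null-set bookkeeping; this is standard (cf. the statistically homogeneous setting of Definition \ref{def}) and uses nothing beyond $(\mathbf{H_1})$, $(\mathbf{H_2})$ and Lemma \ref{lem12}.
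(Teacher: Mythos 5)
Your proposal is correct and follows essentially the same route as the paper's proof: the identical splitting $q^{\varepsilon}u^{\varepsilon}-\bar{q}u = q^{\varepsilon}(u^{\varepsilon}-u)+(q^{\varepsilon}-\bar{q})u$, with the first term killed by the uniform bound $(\mathbf{H_2})$ together with the Skorohod almost-sure strong convergence, and the second by the Birkhoff ergodic theorem giving $q(\mathbf{T}_{x/\varepsilon}\omega)\rightharpoonup \bar{q}$ tested against the fixed limit function. Your version is in fact more explicit than the paper's one-line justification, particularly on the null-set bookkeeping across test functions.
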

\begin{proof}
\begin{eqnarray}\label{pf-equ1}
&&\int_0^T\int_{\mathbb{T}^2} q(\mathbf{T}_{\frac{x}{\varepsilon}}\omega)u^{\varepsilon}(x,t)\varphi(x)\phi(t)dxdt-\int_0^T\int_{\mathbb{T}^2} \bar{q}u(x,t)\varphi(x)\phi(t)dxdt\nonumber\\&&\quad\quad\quad\quad=
\int_0^T\int_{\mathbb{T}^2} q(\mathbf{T}_{\frac{x}{\varepsilon}}\omega)(u^{\varepsilon}(x,t)-u(x,t))\varphi(x)\phi(t)dxdt\nonumber\\&&\quad\quad\quad\quad\quad\quad
+\int_0^T\int_{\mathbb{T}^2}(q(\mathbf{T}_{\frac{x}{\varepsilon}}\omega)-\bar{q})u(x,t)\varphi(x)\phi(t)dxdt.
\end{eqnarray}
By assumption $(\mathbf{H_2})$, (\ref{theo-equ}),~Lemma \ref{lem12} and the Birkhoff ergodic theorem~\cite[Theorem 7.2]{Zhi1}, (\ref{pf-equ1}) vanishes as $\varepsilon\to 0$. The proof of Lemma \ref{pf-lem} is complete.
\end{proof}
\begin{lemma}\label{pf-lem1}
Assume $(\mathbf{H_{3}})$ holds. For every $T >0$, almost sure $\omega\in\Omega$,
$\varphi(x)\in C_0^\infty(\mathbb{T}^2)$, $\phi(t)\in C^\infty (0,T)$ such that
\begin{eqnarray}\label{pf-lemm1-equ}
\lim_{\varepsilon\rightarrow0}\int_0^T\int_{\mathbb{T}^2} (\sigma^\varepsilon(t,u^\varepsilon(x,t))\varphi(x)\phi(t)dxdW_1(t)=\int_0^T\int_{\mathbb{T}^2}\bar{\sigma}(u(x,t)))\varphi(x)\phi(t) dxdW_1(t).
\end{eqnarray}
\end{lemma}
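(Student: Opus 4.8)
The plan is to establish \eqref{pf-lemm1-equ} by first replacing $u^\varepsilon$ with its a.s.\ limit $u$ in the nonlinear coefficient, and then invoking the Stratonovich--Khasminskii averaging principle to pass to the time average $\bar\sigma$. Concretely, I would split the difference between the two sides of \eqref{pf-lemm1-equ} as
\begin{eqnarray*}
&&\int_0^T\int_{\mathbb{T}^2}\bigl(\sigma^\varepsilon(t,u^\varepsilon(x,t))-\sigma^\varepsilon(t,u(x,t))\bigr)\varphi(x)\phi(t)\,dx\,dW_1(t)\\
&&\quad+\int_0^T\int_{\mathbb{T}^2}\bigl(\sigma^\varepsilon(t,u(x,t))-\bar\sigma(u(x,t))\bigr)\varphi(x)\phi(t)\,dx\,dW_1(t)=:I_1^\varepsilon+I_2^\varepsilon.
\end{eqnarray*}
For $I_1^\varepsilon$, I would use the It\^o isometry together with the Lipschitz bound on $\sigma$ from $(\mathbf{H_3})$, $\|\sigma(t,h_1)-\sigma(t,h_2)\|_{\mathcal L_2(H_Q,H)}^2\le L\|h_1-h_2\|_H^2$, to bound $\mathbb E|I_1^\varepsilon|^2$ by a constant times $\int_0^T\|u^\varepsilon(t)-u(t)\|^2\,dt$ (the test functions $\varphi,\phi$ contribute only a finite multiplicative constant). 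Since $u^\varepsilon\to u$ a.s.\ in $C(0,T;X^1)$ and the sequence is bounded in the relevant $L^{2p}$ norms by Propositions \ref{tight-theo}--\ref{tight-2theo1}, dominated convergence gives $\mathbb E|I_1^\varepsilon|^2\to 0$, hence $I_1^\varepsilon\to 0$ in probability (and along a subsequence, a.s.).

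The substantive term is $I_2^\varepsilon$, where $u$ is now \emph{fixed} (on the new probability space) and only the fast time variable $t/\varepsilon$ oscillates inside $\sigma$. Here I would apply the Stratonovich--Khasminskii averaging technique: since $\sigma(\cdot,h)$ is periodic in its first argument with period $T$ and $\bar\sigma(h)=\frac1T\int_0^T\sigma(t,h)\,dt$, the antiderivative $\Theta(t,h):=\int_0^t\bigl(\sigma(r,h)-\bar\sigma(h)\bigr)\,dr$ is bounded uniformly in $t$ and in $h$, with $\partial_h\Theta$ and $\partial_h^2\Theta$ bounded as well by the derivative bounds in $(\mathbf{H_3})$. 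Writing $\sigma(t/\varepsilon,u)-\bar\sigma(u)=\varepsilon\,\frac{d}{dt}\bigl[\Theta(t/\varepsilon,u)\bigr]-\varepsilon\,\partial_h\Theta(t/\varepsilon,u)\,\partial_t u$, one integrates the stochastic integral $I_2^\varepsilon$ by parts (in the sense of It\^o, using the equation \eqref{p-main-equ1} for $\partial_t u$) to express it as $\varepsilon$ times terms that are each controlled in $L^2(\Omega)$ by the a priori estimates of Section \ref{sec:tig-woutsf}; the boundary contributions at $t=0,T$ carry a factor $\varepsilon$, and the It\^o correction terms are likewise $O(\varepsilon)$ in $L^2$. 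Therefore $\mathbb E|I_2^\varepsilon|^2=O(\varepsilon^2)\to0$.

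The main obstacle I anticipate is the rigorous justification of the ``integration by parts in the fast variable'' for the stochastic integral $I_2^\varepsilon$: one must keep careful track of the It\^o cross-variation between $\Theta(t/\varepsilon,u(t))$ and the driving noise $W_1$, and verify that $\partial_h\Theta(t/\varepsilon,u(t))\,\partial_t u(t)$ is integrable against $dW_1$ with the bounds uniform in $\varepsilon$ — this is precisely where the second-derivative bound on $\sigma$ in $(\mathbf{H_3})$ and the higher-regularity estimate \eqref{theo1-2eequ} enter. Once these uniform bounds are in place, combining $I_1^\varepsilon\to0$ and $I_2^\varepsilon\to0$ yields \eqref{pf-lemm1-equ}, completing the proof.
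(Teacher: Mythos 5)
Your handling of $I_1^\varepsilon$ is sound, and it plays the same role as the Lipschitz terms in the paper's own argument, which uses a different decomposition: the paper first splits off $\sigma^\varepsilon(t,u^\varepsilon)-\bar\sigma(u^\varepsilon)$ and then runs a Khasminskii time discretization, partitioning $[0,T]$ into intervals of length $\delta$, freezing $u^\varepsilon$ at the left endpoints by means of the H\"{o}lder continuity estimate (\ref{theo2-equ5}), and treating the frozen oscillatory stochastic integral via weak convergence of periodic functions to their mean; the remaining Lipschitz terms are handled as you handle $I_1^\varepsilon$.

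The genuine gap is in your treatment of $I_2^\varepsilon$. The antiderivative $\Theta(t,h)=\int_0^t(\sigma(r,h)-\bar\sigma(h))\,dr$ gains a factor of $\varepsilon$ only when the oscillating quantity is integrated against $dt$, because there one can literally integrate by parts. Inside an It\^{o} integral there is no such operation: $\int_0^T\varepsilon\,\tfrac{d}{dt}\bigl[\Theta(t/\varepsilon,u(t))\bigr]\,dW_1(t)$ does not collapse to boundary terms, the integrand remains of order one, and in addition $\partial_h\Theta(t/\varepsilon,u)\,\partial_t u$ is not an admissible It\^{o} integrand, since $\partial_t u$ itself contains the distributional term $\bar\sigma(u)\dot W$. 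More decisively, the It\^{o} isometry gives, up to the bounded deterministic factor coming from $\varphi$ and $\phi$, $\mathbb{E}|I_2^\varepsilon|^2=\mathbb{E}\int_0^T\|\sigma(t/\varepsilon,u(t))-\bar\sigma(u(t))\|^2_{\mathcal{L}_2(H_Q,H)}\,dt$ times such a factor, and squaring destroys precisely the cancellation you are trying to exploit: for $\sigma(t,h)=\sin(t)\,\sigma_0(h)$, whose average over a period vanishes, this quantity converges to $\tfrac12\,\mathbb{E}\int_0^T\|\sigma_0(u(t))\|^2\,dt>0$ rather than to $0$. Hence the claim $\mathbb{E}|I_2^\varepsilon|^2=O(\varepsilon^2)$ is false, and no bookkeeping of cross-variations will repair it; averaging an oscillatory coefficient inside a martingale term is qualitatively different from averaging inside a Lebesgue integral (the object that naturally averages there is $\overline{\sigma\sigma^*}$, not $\bar\sigma$). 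The paper sidesteps your corrector computation by invoking weak convergence of the frozen periodic integrand, but note that the passage from weak convergence of the integrand to convergence of the stochastic integral is exactly the delicate point at issue.
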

\begin{proof}
The (\ref{pf-lemm1-equ}) is written as
\begin{eqnarray*}\label{pf-lemm1-equ2}
&&\int_0^T\int_{\mathbb{T}^2} (\sigma^\varepsilon(t,u^\varepsilon(x,t))-\bar{\sigma}(u^\varepsilon(x,t))+\bar{\sigma}(u^\varepsilon(x,t))-\bar{\sigma}(u(x,t)))\varphi(x)\phi(t) ) dx dW(t)
\\&&=\int_0^T\int_{\mathbb{T}^2} (\sigma^\varepsilon(t,u^\varepsilon(x,t))-\bar{\sigma}(u^\varepsilon(x,t)))\varphi(x)\phi(t)  dx dW(t)\\&&\quad\quad+\int_0^T\int_{\mathbb{T}^2} (\bar{\sigma}(u^\varepsilon(x,t))-\bar{\sigma}(u(x,t)))\varphi(x)\phi(t)  dx dW(t)\\&&\triangleq I_1+I_2
\end{eqnarray*}
For $I_1$, we divide the time interval $[0,T]$ into small interval of size $\delta > 0$, i.e. $0=t_0< t_1 < \cdots <t_{[\frac{T}{\delta}]}+1=T$, $t_k=k\delta, k=0,1,\cdots, [\frac{T}{\delta}]$, for all $t \in [t_k,t_{k+1}]$, we obtain
\begin{eqnarray*}\label{pf-lemm1-equ3}
&&\int_{t_k}^t\int_{\mathbb{T}^2}  (\sigma^\varepsilon(s,u^\varepsilon(x,s))-\sigma^\varepsilon(s,u^\varepsilon(x,t_k))+\sigma^\varepsilon(s,u^\varepsilon(x,t_k))-\bar{\sigma}(u^\varepsilon(x,t_k))\\&& \quad\quad\quad+\bar{\sigma}(u^\varepsilon(x,t_k))-\bar{\sigma}(u^\varepsilon(x,s))+\bar{\sigma}(u^\varepsilon(x,s))-\bar{\sigma}(u(x,s)))\varphi(x)\phi(s)dxdW(s)
\\&&=\int_{t_k}^t\int_{\mathbb{T}^2}  (\sigma^\varepsilon(s,u^\varepsilon(x,s))-\sigma^\varepsilon(s,u^\varepsilon(x,t_k)))\varphi(x)\phi(s)dxdW(s)\\&&\quad\quad+\int_{t_k}^t\int_{\mathbb{T}^2} (\sigma^\varepsilon(s,u^\varepsilon(x,t_k))-\bar{\sigma}(u^\varepsilon(x,t_k)))\varphi(x)\phi(s)dxdW(s)
\\&&\quad\quad+\int_{t_k}^t\int_{\mathbb{T}^2} (\bar{\sigma}(u^\varepsilon(x,t_k))-\bar{\sigma}(u^\varepsilon(x,s)))\varphi(x)\phi(s)dxdW(s)\\&&\quad\quad+\int_{t_k}^t\int_{\mathbb{T}^2}  (\bar{\sigma}(u^\varepsilon(x,s))-\bar{\sigma}(u(x,s)))\varphi(x)\phi(s)dxdW(s)
\\&& \triangleq I_{11}+I_{12}+I_{13}+I_{14}.
\end{eqnarray*}
For $I_{11}$, thanks to the fact that  $\delta > 0$ is enough small, by (\ref{theo2-equ5}) and $(\mathbf{H_3})$, we have  $I_{11}$ vanishes as $\varepsilon\to 0$. For $I_{12}$, due to the fact that $\sigma(t,\cdot)$ is periodic with respect to  $t$,  the~\cite[Theorem 2.6]{Ci} yields $I_{12}$  converges to $0$ as $\varepsilon\to 0$. For $I_{13}$, thanks to the fact that  $\delta > 0$ is enough small,  the (\ref{theo2-equ5}) and $(\mathbf{H_3})$ yield $I_{13}$ vanishes  as $\varepsilon\to 0$. For $I_{14}$, $I_{14}$ converges to $0$ as $\varepsilon\to 0$ by  $(\mathbf{H_3})$. Thus $I_1$ converges to $0$ as $\varepsilon \to 0$. By  $(\mathbf{H_3})$, we have $I_{2}$  vanishes as $\varepsilon\to 0$.
The proof of Lemma \ref{pf-lem1} is complete.
\end{proof}
\begin{lemma}\label{pf-lem2}
For all  $\varphi(x)\in C_0^\infty(\mathbb{T}^2)$, $\phi(t)\in C^\infty (0,T)$ such that
\begin{eqnarray*}\label{pf-lem2-equ}
\lim_{\varepsilon\to 0}\int_0^T\int_{\mathbb{T}^2}B(u^\varepsilon(x,t))\varphi(x)\phi(t)dxdt=\int_0^T\int_{\mathbb{T}^2}B(u(x,t))\varphi(x)\phi(t)dxdt.
\end{eqnarray*}
\end{lemma}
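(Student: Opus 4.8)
The plan is to pass to the limit in the bilinear term $B(u^\varepsilon)$ by exploiting the almost sure convergence $u^\varepsilon \to u$ in $C(0,T;X^1)$ coming from the Skorohod representation, together with the uniform bounds in higher-order norms from Propositions \ref{tight-theo}--\ref{tight-2theo1}. First I would recall that $X^1 = C(0,T;V^1)$, so $u^\varepsilon \to u$ uniformly in $t$ with values in $V^1 = H^1(\mathbb{T}^2;\mathbb{R}^2)\cap H$; in particular $\sup_{t\in[0,T]}\|u^\varepsilon(t) - u(t)\|_1 \to 0$ almost surely. Writing
\begin{eqnarray*}
B(u^\varepsilon(x,t)) - B(u(x,t)) = B(u^\varepsilon - u, u^\varepsilon) + B(u, u^\varepsilon - u),
\end{eqnarray*}
I would test this difference against $\varphi(x)\phi(t)$. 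Using part $(iv)$ of Lemma \ref{lemma-b} (or integration by parts via part $(i)$), each term is controlled by $\|u^\varepsilon - u\|_1$ times norms of $u^\varepsilon$, $u$, and the fixed smooth test function; since $\varphi,\phi$ are smooth and compactly supported, all factors involving them are bounded constants.

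Concretely, the key step is the estimate
\begin{eqnarray*}
\left|\int_0^T\int_{\mathbb{T}^2}\big(B(u^\varepsilon(x,t)) - B(u(x,t))\big)\varphi(x)\phi(t)\,dx\,dt\right|
\leq C_\varphi \int_0^T \|u^\varepsilon(t) - u(t)\|_1\big(\|u^\varepsilon(t)\|_1 + \|u(t)\|_1\big)\phi_\infty\,dt,
\end{eqnarray*}
where after integrating the derivative onto $\varphi$ one uses $|(B(u^\varepsilon - u, u^\varepsilon),\varphi\phi)| \leq C\|u^\varepsilon - u\|_{L^2}\|u^\varepsilon\|_{L^2}\|\nabla(\varphi\phi)\|_{L^\infty}$ and similarly for the other term. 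By the uniform convergence in $X^1$, $\sup_t\|u^\varepsilon(t) - u(t)\|_1 \to 0$ a.s., while $\int_0^T(\|u^\varepsilon(t)\|_1 + \|u(t)\|_1)\,dt$ stays bounded a.s. thanks to the bound \eqref{theo-eequ} of Proposition \ref{tight-theo} (which gives, after Skorohod representation, an a.s.\ finite bound on $\sup_t\|u^\varepsilon(t)\|_1$). Hence the right-hand side tends to $0$, which is exactly the claimed convergence.

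The main obstacle, as usual for Navier--Stokes-type limits, is that $B$ is quadratic, so the naive observation ``product of two weakly convergent sequences need not converge'' applies; the remedy here is that the convergence $u^\varepsilon \to u$ is strong in $C(0,T;V^1)$ rather than merely weak, and $V^1 \hookrightarrow L^4(\mathbb{T}^2)$ by Ladyzhenskaya's inequality (Lemma \ref{lemma-ine}), which suffices to make sense of $B(u)$ and pass to the limit. I would therefore organize the proof as: (1) split $B(u^\varepsilon) - B(u)$ into the two bilinear pieces; (2) move the gradient onto the smooth test function $\varphi\phi$; (3) bound using Lemma \ref{lemma-b}$(iv)$ and H\"older's inequality in terms of $\|u^\varepsilon - u\|_1$ and the uniform $V^1$-bounds; (4) invoke the a.s.\ strong convergence in $X^1$ from Skorohod together with Proposition \ref{tight-theo} to conclude the integral vanishes as $\varepsilon \to 0$. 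A minor point to check is measurability/integrability so that dominated convergence (or just the deterministic bound above) legitimately applies pathwise.
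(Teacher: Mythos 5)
Your proof is correct and follows essentially the same route as the paper: both reduce the claim to the bound $C_{\varphi,\phi}\int_0^T\|u^\varepsilon(t)-u(t)\|_1(\|u^\varepsilon(t)\|_1+\|u(t)\|_1)\,dt$ and conclude from the almost sure strong convergence in $C(0,T;V^1)$ together with the uniform $H^1$ bound of Proposition \ref{tight-theo}. The only difference is cosmetic: the paper invokes the Lipschitz estimate $\|B(u)-B(v)\|_{-1}\leq C(\|u\|_1+\|v\|_1)\|u-v\|_1$ of Lemma \ref{lemma-b}$(v)$ directly and pairs with $\varphi\phi$ in the $H^{-1}$--$H^1$ duality, whereas you re-derive that estimate via the bilinear splitting $B(u^\varepsilon)-B(u)=B(u^\varepsilon-u,u^\varepsilon)+B(u,u^\varepsilon-u)$ and integration by parts onto the test function.
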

\begin{proof}
\begin{eqnarray}\label{pf-lem2-equ1}
&&\int_0^T\int_{\mathbb{T}^2}(B(u^\varepsilon(x,t))-B(u(x,t)))\varphi(x)\phi(t)dxdt\nonumber\\&&\quad\quad\leq \int_0^T\|B(u^\varepsilon(x,t))-B(u(x,t))\|_{-1}\|\varphi(x)\|_1\phi(t)dt.
\end{eqnarray}
By $(v)$ of Lemma \ref{lemma-b} and (\ref{theo-eequ}), we deduce (\ref{pf-lem2-equ1}) converges to $0$ as $\varepsilon\to 0$. The proof of Lemma \ref{pf-lem2} is complete.
\end{proof}

\begin{proof}[\textbf{Proof of Theorem \ref{main-theom1}}]
Multiplying both sides of the equations (\ref{p-main-equ1}) by the test function $\varphi(x,t)\in C^1(0,T; H^1(\mathbb{T}^2))$ with $\varphi(x,T)=0,\;\varphi(x,0)=1$ yields
\begin{eqnarray}\label{main-theo-equ}
&&-\int_0^T\int_{\mathbb{T}^2}u^\varepsilon(x,s)\frac{\partial\varphi(x,s)}{\partial s}dxds\nonumber\\&&\quad=\int_{\mathbb{T}^2} u_0(x)dx\nonumber\\&&\quad\quad-\int_0^T\int_{\mathbb{T}^2}Au^\varepsilon(x,s)\varphi(x,s)dxds\nonumber\\&&\quad\quad
-\int_0^T\int_{\mathbb{T}^2}B(u^\varepsilon(x,s))\varphi(x,s)dxds\nonumber\\&&\quad\quad
+\int_0^T\int_{\mathbb{T}^2}q^\varepsilon(\textbf{T}_{\frac{x}{\varepsilon}}\omega)u^\varepsilon(x,s)\varphi(x,s)dxds\nonumber\\&&\quad\quad
+\int_0^T\int_{\mathbb{T}^2}\sigma^\varepsilon(s,u^\varepsilon(x,s))\varphi(x,s)dxdW(s).
\end{eqnarray}
Passing the limit $(\varepsilon\to 0) $ on the both sides of (\ref{main-theo-equ}) yields
\begin{eqnarray}\label{main-theo-equ1}
&&-\int_0^T\int_{\mathbb{T}^2}u(x,s)\frac{\partial\varphi(x,s)}{\partial s}dxds\nonumber\\&&\quad=\int_{\mathbb{T}^2} u_0(x)dx\nonumber\\&&\quad\quad-\int_0^T\int_{\mathbb{T}^2}Au(x,s)\varphi(x,s)dxds\nonumber\\&&\quad\quad
-\int_0^T\int_{\mathbb{T}^2}B(u(x,s))\varphi(x,s)dxds\nonumber\\&&\quad\quad
+\int_0^T\int_{\mathbb{T}^2}\bar{q}u(x,s)\varphi(x,s)dxds\nonumber\\&&\quad\quad
+\int_0^T\int_{\mathbb{T}^2}\bar{\sigma}(u(x,s))\varphi(x,s)dxdW(s).
\end{eqnarray}
Here, we have used Lemma \ref{pf-lem}, Lemma \ref{pf-lem1} and Lemma \ref{pf-lem2}. That is,
\begin{eqnarray*}\label{main-theo-equ2}
\begin{cases}
{u}_{t}(t)+[A u(t)+B(u(t))]=\bar{q}u(t)+\bar{\sigma}(u(t))dW(t),\\
\mathrm{div}\; u(t)=0,\\
u(0)=u_0(x).
\end{cases}
\end{eqnarray*}
We finish the proof of Theorem  \ref{main-theom1}.
\end{proof}

\textbf{Acknowledgements}\\
 The authors would like to thank all anonymous commenters for  their views and helpful suggestions for improvement. This work is supported  by the National Natural Science Foundation of China (Nos. 12401304, 12271293), University Natural Science Research Project of Anhui Province(No. 2022AH050404), the project of Youth Innovation Team of Universities of Shandong Province (No. 2023KJ204) and  Shandong Provincial Natural Science Foundation (Nos. ZR2023MA002, ZR2024MA069).

\textbf{Author Declarations}

\textbf{Conflict of interest}\\   The authors have no conflicts to disclose.

\textbf{Data availability}\\
Data sharing not applicable to this article as no data sets were generated or analyzed during the current study.

\end{document}